\newtheorem{thm}{Theorem}[section]
\newtheorem{lem}[thm]{Lemma}
\newtheorem{cor}[thm]{Corollary}
\newtheorem{defi}[thm]{Definition}
\begin{document}

\title{Dixmier traces are weak$^*$ dense in the set of all fully symmetric traces}

\author{F. Sukochev}
\ead{f.sukochev@unsw.edu.au}
\address{School of Mathematics and Statistics, University of New South Wales, Sydney, 2052, Australia.}
\author{D. Zanin\corref{cor1}}
\ead{d.zanin@unsw.edu.au}
\address{School of Mathematics and Statistics, University of New South Wales, Sydney, 2052, Australia.}
\cortext[cor1]{Corresponding author}

\begin{abstract} We extend Dixmier's construction of singular traces (see \cite{Dixmier}) to arbitrary fully symmetric operator ideals. In fact, we show that the set of Dixmier traces is weak$^*$ dense in the set of all fully symmetric traces (that is, those traces which respect Hardy-Littlewood submajorization). Our results complement and extend earlier work of Wodzicki \cite{Wodzicki}.
\end{abstract}

\begin{keyword}
Dixmier traces, operator ideals. \MSC{47L20 (primary), 47B10, 46L52, 58B34 (secondary)}
\end{keyword}

\maketitle

\section{Introduction}

In his groundbreaking paper \cite{Dixmier}, J. Dixmier proved the existence of singular traces (that is, linear positive unitarily invariant functionals which vanish on all finite dimensional operators) on the algebra $B(H)$ of all bounded linear operators acting on infinite-dimensional separable Hilbert space $H.$ Namely, if $\psi:\mathbb{R}_+\to\mathbb{R}_+$ is a concave increasing function such that
\begin{equation}\label{simple psi cond}
\lim_{t\to\infty}\frac{\psi(2t)}{\psi(t)}=1,
\end{equation}
then there is a singular trace $\tau_{\omega},$ defined for every positive compact operator $A\in B(H)$ by setting
\begin{equation}\label{dixmier trace}
\tau_{\omega}(A)=\omega(\frac1{\psi(n+1)}\sum_{k=0}^n\mu(k,A)).
\end{equation}
Here, $\omega$ is an arbitrary dilation invariant singular state on the algebra $l_{\infty}$ of all bounded sequences and $\{\mu(k,A)\}_{k\geq0}$ is the sequence of singular values of the compact operator $A\in B(H)$ taken in the descending order. This trace is finite on $0\leq A\in B(H)$ if $A$ belongs to the Marcinkiewicz ideal (see e.g. \cite{GohKr1},\cite{GohKr2},\cite{Pietsch})
$$\mathcal{M}_{\psi}:=\{A\in B(H):\ \sup_{n\geq0}\frac1{\psi(n+1)}\sum_{k=0}^n\mu(k,A)<\infty\}.$$
In \cite{KSS}, Dixmier's result and construction was extended to an arbitrary Marcin\-kiewicz ideal $\mathcal{M}_{\psi}$ with the following condition on $\psi$
\begin{equation}\label{dpss psi cond}
\liminf_{t\to\infty}\frac{\psi(2t)}{\psi(t)}=1.
\end{equation}

All the traces defined above by formula \eqref{dixmier trace} vanish on the ideal $\mathcal{L}_1$ consisting of all compact operators $A\in B(H)$ such that
$${\rm Tr}(|A|)\stackrel{def}{=}\sum_{k=0}^\infty\mu(k,A)<\infty.$$

A symmetric operator ideal $\mathcal{E}\subset B(H)$ is a Banach space such that $A\in\mathcal{E}$ and $\{\mu(k,B)\}_{k\geq0}\leq\{\mu(k,A)\}_{k\geq0}$ implies that $B\in\mathcal{E}$ and $\|B\|_{\mathcal{E}}\leq\|A\|_{\mathcal{E}}$ (see e.g. \cite{GohKr1}, \cite{GohKr2}, \cite{Simon}\footnote{ We have to caution the reader that in Theorem 1.16 of \cite{Simon} the assertion $(b)$ does not hold for the norm of an arbitrary symmetric operator ideal $\mathcal {E}$ (see e.g. corresponding counterexamples in \cite[p. 83]{KS}).}, \cite{Schatten}, \cite{KScan}).

In analyzing Dixmier's proof of the linearity of $\tau_{\omega}$ given by \eqref{dixmier trace}, it was observed in \cite{KSS} (see also \cite{CaSuk,DPSS}) that $\tau_{\omega}$ possesses the following fundamental property, namely if $0\leq A,B\in\mathcal{M}_{\psi}$ are such that
\begin{equation}\label{hl maj disk}
\sum_{k=0}^n\mu(k,B)\leq\sum_{k=0}^n\mu(k,A),\quad\forall n\geq0,
\end{equation}
then $\tau_{\omega}(B)\leq\tau_{\omega}(A).$ Such a class of traces was termed \lq\lq fully symmetric\rq\rq in \cite{KScan}, \cite{SZ} (see also earlier papers \cite{DPSS},\cite{LSS}, where the term \lq\lq symmetric\rq\rq was used). It is natural to consider such traces only on fully symmetric operator ideals $\mathcal{E}$ (that is, on symmetric operator ideals $\mathcal{E}$ satisfying the condition: if $A,B$ satisfy \eqref{hl maj disk} and $A\in\mathcal{E},$ then $B\in\mathcal{E}$ and $\|B\|_{\mathcal{E}}\leq\|A\|_{\mathcal{E}}$). In fact, it was established in \cite{DPSS} that every Marcinkiewicz ideal $\mathcal{M}_{\psi}$ with $\psi$ satisfying the condition \eqref{dpss psi cond} possesses fully symmetric traces.

Furthermore, in the recent paper \cite{KSS}, the following unexpected result was established.

\begin{thm}\label{kss theorem} If $\psi$ satisfies the condition \eqref{dpss psi cond}, then every fully symmetric trace on $\mathcal{M}_{\psi}$ is a Dixmier trace $\tau_{\omega}$ for some $\omega.$
\end{thm}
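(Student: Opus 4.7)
\emph{Stage 1 (Reduction to sequences).} By unitary invariance and compatibility with Hardy-Littlewood submajorization, $\varphi$ factors through singular values, yielding a positive, submajorization-monotone functional on the sequence Marcinkiewicz space $m_\psi$, which I continue to denote by $\varphi$. Setting
$$C_\psi(x)(n):=\frac{1}{\psi(n+1)}\sum_{k=0}^n x^*(k),\qquad x\in m_\psi^+,$$
the task reduces to: produce a dilation-invariant singular state $\omega$ on $l_\infty$ satisfying $\omega(C_\psi(x))=\varphi(x)$ for every $x\in m_\psi^+$. Then $\tau_\omega=\varphi$ on $\mathcal{M}_\psi^+$ by \eqref{dixmier trace}, and linearity propagates this to all of $\mathcal{M}_\psi$.

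\emph{Stage 2 (Hahn-Banach extension).} I would define on $l_\infty$ a sublinear, positively homogeneous majorant $p$ encoding the Cesaro-image cone; a natural choice is
$$p(\xi):=\inf\Big\{\textstyle\sum_i\varphi(x_i):x_i\in m_\psi^+,\ \xi(n)\leq\sum_i C_\psi(x_i)(n)\ \text{for all sufficiently large}\ n\Big\}.$$
Sublinearity is immediate from the definition. The crucial lemma is $p(C_\psi(y))=\varphi(y)$: the inequality $\leq$ follows from the single-term cover $x_1=y$, while $\geq$ rests on the Hardy-Littlewood triangle inequality $\sum_{k=0}^n(x+y)^*(k)\leq\sum_{k=0}^n x^*(k)+\sum_{k=0}^n y^*(k)$, full symmetry of $\varphi$, and the slow variation of $\psi$ from \eqref{dpss psi cond}, which together convert an eventual cover $C_\psi(y)\leq\sum_i C_\psi(x_i)$ into a submajorization statement $y\prec\prec\sum_i x_i$ modulo finite rank. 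Hahn-Banach then yields a positive linear $\omega_0\leq p$ on $l_\infty$ agreeing with $\varphi\circ C_\psi^{-1}$ on the cone; vanishing of $\varphi$ on finite-rank operators forces $\omega_0|_{c_0}=0$.

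\emph{Stage 3 (Dilation invariance).} The hypothesis \eqref{dpss psi cond} supplies a strictly increasing sequence $(n_j)$ with $\psi(2n_j)/\psi(n_j)\to 1$; along $(n_j)$ the action of the dyadic dilation on $C_\psi(x)$ is asymptotically absorbed by the slow variation of $\psi$. Averaging $\omega_0$ over the amenable semigroup generated by dilations via an invariant mean produces a dilation-invariant state $\omega$ that still agrees with $\varphi$ on the cone, completing the construction.

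\emph{Main obstacle.} The hardest step is the identity $p(C_\psi(y))=\varphi(y)$: it demands a transfer principle converting eventual pointwise domination of Cesaro averages into Hardy-Littlewood submajorization of the underlying sequences, up to finite-rank perturbations. This transfer is available only in the asymptotic regime along $(n_j)$, and it is precisely the $\liminf$ condition \eqref{dpss psi cond}---strictly weaker than Dixmier's original \eqref{simple psi cond}---that enables it; the same mechanism is what makes the averaging in Stage 3 inert on the Cesaro cone.
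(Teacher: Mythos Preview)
The paper does not prove this theorem. Theorem~\ref{kss theorem} is quoted in the introduction as a result of Kalton--Sedaev--Sukochev \cite{KSS} and is then invoked as a black box in the proof of Theorem~\ref{main theorem}; the present paper contains no argument for it. There is consequently nothing here to compare your proposal against.

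On the merits of your sketch: Stages~1--2 are broadly in the spirit of the actual \cite{KSS} argument (a Hahn--Banach extension governed by a Ces\`aro-type sublinear majorant), although your justification of $p(C_\psi(y))\geq\varphi(y)$ misattributes the work---cancelling $\psi(n+1)$ in an eventual inequality $C_\psi(y)\leq\sum_i C_\psi(x_i)$ already yields the partial-sum inequality $\sum_{k\le n}y^*(k)\leq\sum_i\sum_{k\le n}x_i^*(k)$ without any slow variation of $\psi$; what remains is to pass from this to $\varphi(y)\leq\sum_i\varphi(x_i)$ via full symmetry and the vanishing of $\varphi$ on finite-rank operators, the latter itself requiring justification. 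The genuine gap is Stage~3. Under the $\liminf$ hypothesis \eqref{dpss psi cond} the ratio $\psi(2n+1)/\psi(n+1)$ need not tend to $1$; it may stay bounded away from $1$ off the sparse sequence $(n_j)$, so $\sigma_2 C_\psi(x)-C_\psi(x)\notin c_0$ in general and a naive invariant-mean averaging of $\omega_0$ over the dilation semigroup will not preserve the identity $\omega(C_\psi(x))=\varphi(x)$. In \cite{KSS} this is handled by building dilation invariance into the sublinear majorant itself rather than by post-hoc averaging; the $\liminf$ condition is exactly what makes that enlarged majorant still satisfy $p(C_\psi(y))\leq\varphi(y)$. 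Your diagnosis of where the difficulty lies is correct, but the mechanism you propose for resolving it would not work as stated.
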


In his seminal paper \cite{Wodzicki}, Wodzicki considered multiplicative renormalisation of positive compact operators. He was probably, the first who suggested that Dixmier construction works on the symmetric operator ideals different from Marcinkiewicz ideals.

More precisely, given a positive function $\psi$ on $(0,\infty)$ (Wodzicki did not assume this function to be either increasing or concave), one can construct a mapping
$$A\to\left\{\frac1{\psi(n+1)}\sum_{k=0}^n\mu(k,A)\right\}_{n\geq0}.$$
Applying some limiting procedure to the latter sequence (Wodzicki used Stone-\v{C}ech compactification for this purpose), we are left with a question whether this construction produces a trace. If it does, then it is natural to refer to such a trace as to the Dixmier trace. 

Wodzicki proved (a very complicated) criterion for the additivity of multiplicative renormalisation (see Theorem 3.4 in \cite{Wodzicki}). The questions of finiteness and non-triviality, also considered in \cite{Wodzicki},  happen to be even harder. In fact, it is proved in \cite{Wodzicki} that, for every principal ideal, multiplicative renormalisation produces a trace if and only if the ideal admits a trace. The latter result still relates to the realm of Marcinkiewicz operator ideals and can be compared with \cite{Varga,DPSS,GI,KSS}.

Our main result extends the above mentioned results of Wodzicki and Theorem \ref{kss theorem} to an arbitrary fully symmetric operator ideal.

\begin{thm}\label{main theorem in paper} Let $\mathcal{E}$ be a fully symmetric operator ideal. If $\mathcal{E}$ admits a trace, then there are Dixmier traces on $\mathcal{E}.$ Moreover, those Dixmier traces are weak$^*$ dense in the set of all fully symmetric traces on $\mathcal{E}.$
\end{thm}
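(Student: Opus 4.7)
My plan is to establish the theorem by reducing to the commutative setting and then carrying out an approximation argument on each finite-dimensional weak$^*$-neighbourhood of a given fully symmetric trace. By the Calkin correspondence, the ideal $\mathcal{E}$ transfers to a fully symmetric sequence space $E$, and fully symmetric traces on $\mathcal{E}$ correspond bijectively to fully symmetric functionals on $E$ via the singular-value map. A Dixmier trace is then the functional
$$x \longmapsto \omega\Bigl(\Bigl\{\tfrac{1}{\psi(n+1)}\sum_{k=0}^n\mu(k,x)\Bigr\}_{n\geq 0}\Bigr),$$
which descends to $E$ as soon as $E\subseteq m_\psi$. Given a fully symmetric $\varphi$ on $E$, positive elements $x_1,\dots,x_m \in E_+$ and $\varepsilon>0$, the task is to produce a concave increasing $\psi$ satisfying \eqref{dpss psi cond} with $E\subseteq m_\psi$, and a dilation-invariant singular state $\omega$ on $\ell_\infty$, such that the resulting Dixmier trace agrees with $\varphi$ on $x_1,\dots,x_m$ to within $\varepsilon$.

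The first task is the construction of $\psi$. Letting $\phi_E$ denote the fundamental function of $E$ (so $E\subseteq m_{\phi_E}$ automatically), take $\psi$ to be the least concave increasing majorant of
$$t\mapsto\phi_E(t)+\sum_{i=1}^m\sum_{k=0}^{\lceil t\rceil-1}\mu(k,x_i).$$
This guarantees both $E\subseteq m_\psi$ and that each sequence $\sigma_i(n):=\tfrac{1}{\psi(n+1)}\sum_{k=0}^n\mu(k,x_i)$ lies in $\ell_\infty$. A sparsification-and-smoothing step, replacing $\psi$ by $\psi\circ h$ for a sufficiently slow-growing increasing $h$ (an operation which only enlarges $m_\psi$), then arranges \eqref{dpss psi cond}; the hypothesis that $\mathcal{E}$ admits a trace (equivalently, $E\not\subseteq\ell_1$) ensures $\psi(t)\to\infty$, making this regularization possible.

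The second task is the production of $\omega$. Define the positively-homogeneous functional $p\colon\ell_\infty\to\mathbb{R}$ as a suitable closure of $\inf\bigl\{\sum_j\lambda_j\varphi(y_j):\sum_j\lambda_j\sigma_{y_j}\geq\xi,\,\lambda_j\geq 0,\,y_j\in E_+\bigr\}$. The fully symmetric property of $\varphi$ (monotonicity under Hardy-Littlewood submajorization) together with linearity of $\varphi$ forces $p$ to be sublinear, monotone, and invariant under the dilation semigroup of $\ell_\infty$, while the calibration of $\psi$ in the first task ensures $p(\sigma_i)=\varphi(x_i)$. Hahn-Banach extends the natural linear form $\sigma_i\mapsto\varphi(x_i)$ to a linear functional on $\ell_\infty$ dominated by $p$, and Markov-Kakutani averaging over the dilation semigroup upgrades this extension to a dilation-invariant singular state $\omega$, whose associated Dixmier trace agrees with $\varphi$ on the $x_i$ to within $\varepsilon$. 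This yields the weak$^*$ density, and in particular the existence claim.

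The principal technical obstacle is showing that the sublinear functional $p$ above is genuinely dilation-invariant and satisfies $p(\sigma_i)=\varphi(x_i)$. Both rely on a quantitative transfer of Hardy-Littlewood submajorization from Cesàro averages $\sigma_y$ back to the singular-value sequences $\mu(y)$, which is precisely where the fully symmetric hypothesis on $\varphi$ is essential; this step is the operator-theoretic analogue of the Hardy-Littlewood-P\'olya rearrangement comparison in the Marcinkiewicz setting. A subordinate difficulty is the simultaneous calibration of $\psi$ in the first task to satisfy both the inclusion $E\subseteq m_\psi$ and the regularity \eqref{dpss psi cond}; the assumption that $\mathcal{E}$ admits a trace is what renders these requirements compatible, and is also what ensures the resulting Dixmier trace is non-degenerate on $\mathcal{E}$.
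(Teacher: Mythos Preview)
Your approach diverges from the paper's and has two concrete gaps.

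First, the regularisation step is stated backwards: composing $\psi$ with a slow-growing $h$ gives $\psi\circ h\le\psi$, which \emph{shrinks} $m_{\psi\circ h}$ rather than enlarging it, so you lose the inclusion $E\subseteq m_{\psi\circ h}$ that you just arranged. More fundamentally, the paper's Definition~\ref{dixmier trace definition} requires the opposite inclusion $\mathcal{M}_\psi\subset\mathcal{E}$ (so that the normalising element $\psi'$ lies in $E$), and handles operators $A\notin\mathcal{M}_\psi$ by extending $\omega$ from $l_\infty$ to all of $\mathfrak{L}_+$ via $\omega(x)=\sup\{\omega(y):0\le y\le x,\ y\in l_\infty\}$. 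Your inclusion $E\subseteq m_\psi$ is neither what the definition asks for nor obviously achievable simultaneously with condition~\eqref{dpss psi cond}.

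Second, and more seriously, the construction of $\omega$ is where the real work lies, and your sketch does not carry it. For the Markov--Kakutani averaging to preserve $\omega(\sigma_i)=\varphi(x_i)$ you need the gauge $p$ itself to be dilation-invariant; but $p$ is built from the maps $y\mapsto\sigma_y$, and their behaviour under dilation is governed precisely by the ratio $\psi(2n+1)/\psi(n+1)$ --- this is exactly the obstruction isolated in Theorem~\ref{linearity criterion}, and you have not supplied any mechanism to control it. The paper sidesteps all of this: it defines the ``normal part'' $\varphi_{normal,\psi}$ of $\varphi$ relative to a Marcinkiewicz subspace $\mathcal{M}_\psi\subset\mathcal{E}$, shows (Theorem~\ref{density}) that the net $\varphi_{normal,\psi_x}$ agrees \emph{exactly} with $\varphi$ on any prescribed finite family $A_1,\dots,A_m$ once $\psi_x'\succ\succ\sum_k\mu(A_k)$, then invokes the Kalton--Sedaev--Sukochev Theorem~\ref{kss theorem} to write $\varphi|_{\mathcal{M}_\psi}$ as a Dixmier trace $\tau_\omega$, and finally extends $\omega$ to $\mathfrak{L}_+$ to recover $\varphi_{normal,\psi}$ on all of $\mathcal{E}$ (Theorem~\ref{main theorem}). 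The Hahn--Banach/averaging work you outline is thus entirely absorbed into the cited KSS result.
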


The result of Theorem \ref{main theorem in paper} is a combination of Theorems \ref{density} and \ref{main theorem} below.

It should also be pointed out that the result of Theorem \ref{linearity criterion} below substantially strengthens the result of Theorem 3.4 in \cite{Wodzicki} and is much easier to apply in concrete situations (at least, in the setting of symmetric operator ideals).

\section{Preliminaries}

The theory of singular traces on symmetric operator ideals rests on some classical analysis which we now review for completeness. For more information, we refer to \cite{SZcrelle}.

Let $H$ be a Hilbert space and let $B(H)$ be the algebra of all bounded operators on $H$ equipped with the uniform norm. For every $A\in B(H),$ one can define a singular value function $\mu(A)$ (see e.g. \cite{FackKosaki}).
\begin{defi} Given an operator $A\in B(H),$ its singular value function $\mu(A)$ is defined by the formula
$$\mu(t,A)=\inf\{\|Ap\|:\ {\rm Tr}(1-p)\leq t\}.$$
\end{defi}
Clearly, $\mu(A)$ is a step function and, therefore, it can be identified with the sequence of singular numbers of the operators $A$ (the singular values are the eigenvalues of the operator $|A|=(A^*A)^{1/2}$ arranged with multiplicity in decreasing order). That is, we also use the notation $\mu(A)=\{\mu(k,A)\}_{k\geq0}.$

Equivalently, $\mu(A)$ can be defined in terms of the distribution function $d_A$ of $A.$ That is, setting
$$d_A(s)={\rm Tr}(E^{|A|}(s,\infty)),\quad s\geq0,$$
we obtain
$$\mu(t,A)=\inf\{s\geq0:\ d_A(s)\leq t\},\quad t>0.$$
Here, $E^{|A|}$ denotes the spectral projection of the operator $|A|.$

Further, we need to recall the important notion of Hardy--Littlewood majorization.
\begin{defi} The operator $B\in B(H)$ is said to be majorized by the operator $A\in B(H)$ (written $B\prec\prec A$) if and only if
$$\int_0^t\mu(s,B)ds\leq\int_0^t\mu(s,A)ds,\quad t\geq0.$$
\end{defi}
We have (see \cite{FackKosaki})
\begin{equation}\label{submaj ineq}
A+B\prec\prec\mu(A)+\mu(B)\prec\prec 2\sigma_{1/2}\mu(A+B)
\end{equation}
for every positive operators $A,B\in B(H).$

If $s>0,$ the dilation operator $\sigma_s:L_{\infty}(0,\infty)\to L_{\infty}(0,\infty)$ is defined by setting
$$(\sigma_s(x))(t)=x(\frac{t}{s}),\quad t>0.$$
Similarly, in the sequence case, we define an operator $\sigma_n$ by setting
$$\sigma_n(a_0,a_1,\cdots)=(\underbrace{a_0,\cdots,a_0}_{\mbox{$n$ times}},\underbrace{a_1,\cdots,a_1}_{\mbox{$n$ times}},\cdots)$$
and an operator $\sigma_{1/2}$ by setting
$$\sigma_{1/2}:(a_0,a_1,a_2,a_3,a_4,\cdots)\to(\frac{a_0+a_1}2,\frac{a_2+a_3}2,\cdots).$$

Below, we define symmetric ideals of $l_{\infty}$ and that of $B(H).$

\begin{defi} An ideal $E$ of the algebra $l_{\infty},$ equipped with the norm $\|\cdot\|_E,$ is said to be symmetric if
\begin{enumerate}
\item $(E,\|\cdot\|_E)$ is a Banach space.
\item For every $x\in E$ and every $y\in l_{\infty},$ we have $\|xy\|_E\leq\|x\|_E\|y\|.$
\item For every $x\in E$ and every permutation $\pi:\mathbb{Z}_+\to\mathbb{Z}_+,$ we have $x\circ\pi\in E$ and $\|x\circ\pi\|_E=\|x\|_E.$
\end{enumerate}
\end{defi}

\begin{defi} A two-sided ideal $\mathcal{E}$ of the algebra $B(H),$ equipped with the norm $\|\cdot\|_{\mathcal{E}},$ is called symmetric operator ideal if
\begin{enumerate}
\item $(\mathcal{E},\|\cdot\|_{\mathcal{E}})$ is a Banach space.
\item For every $A\in\mathcal{E}$ and every $B\in B(H),$ we have
$$\|AB\|_{\mathcal{E}}\leq\|A\|_{\mathcal{E}}\|B\|,\quad \|BA\|_{\mathcal{E}}\leq\|A\|_{\mathcal{E}}\|B\|.$$
\end{enumerate}
\end{defi}

It follows easily from the definition that, for every $A\in\mathcal{E}$ and every $B\in B(H)$ with $\mu(B)\leq\mu(A),$ we have $B\in\mathcal{E}$ and $\|B\|_{\mathcal{E}}\leq\|A\|_{\mathcal{E}}$ (see, e.g. \cite{GohKr1}). In particular, we have $\|A\|_{\mathcal{E}}=\|UA\|_{\mathcal{E}}=\|AU\|_{\mathcal{E}}$ for every $A\in\mathcal{E}$ and every unitary $U\in B(H).$ Thus, $\|A\|_{\mathcal{E}}=\|A^*\|_{\mathcal{E}}=\|\,|A|\,\|_{\mathcal{E}}$ for every $A\in\mathcal{E}.$ It is well-known that every proper ideal of the algebra $B(H)$ consists of compact operators.

The following fundamental result appeared in \cite{KS}.

\begin{thm} Let $E$ be a symmetric ideal in $l_{\infty}.$ The set
$$\mathcal{E}=\{A\in B(H):\ \mu(A)\in E\}$$
equipped with a norm $\|A\|_{\mathcal{E}}=\|\mu(A)\|_E$ is a symmetric operator ideal.
\end{thm}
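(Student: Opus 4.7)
The plan is to verify each clause of the symmetric operator ideal definition for $(\mathcal{E},\|\cdot\|_\mathcal{E})$. Positive-definiteness and homogeneity of $\|\cdot\|_\mathcal{E}$ are immediate from the corresponding properties of $\mu$, while the two-sided ideal property together with the bimodule bound $\|AB\|_\mathcal{E},\|BA\|_\mathcal{E}\leq\|B\|\,\|A\|_\mathcal{E}$ follows from the classical pointwise inequalities $\mu(AB),\mu(BA)\leq\|B\|\mu(A)$ combined with axiom (2) of the symmetric sequence ideal $E$. The substantive work lies in the triangle inequality (and with it, linearity of $\mathcal{E}$) and completeness.

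For the triangle inequality the plan is to combine the Ky Fan submajorization $\mu(A+B)\prec\prec\mu(A)+\mu(B)$ (a consequence of \eqref{submaj ineq}) with the abstract claim that every symmetric Banach ideal $E\subset l_\infty$ automatically respects Hardy--Littlewood submajorization: whenever $0\leq a\prec\prec b$ with $b\in E$ one has $a\in E$ and $\|a\|_E\leq\|b\|_E$. For finite-length sequences this is classical: by Hardy--Littlewood--P\'olya one has $a=Db$ for a doubly substochastic matrix $D$, which by Birkhoff decomposes as a convex combination of sub-permutation matrices $P_i$; each $P_ib$ equals a permutation of $\chi_{S_i}\cdot b$, whose $E$-norm is bounded by $\|b\|_E$ via axioms (2) and (3). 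Passage to infinite sequences is done by truncation, leveraging the continuous embedding $E\hookrightarrow l_\infty$ that follows from axioms (2) and (3) applied to the standard basis vector $(1,0,0,\ldots)$. Granted this claim, $\|A+B\|_\mathcal{E}\leq\|\mu(A)+\mu(B)\|_E\leq\|A\|_\mathcal{E}+\|B\|_\mathcal{E}$.

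For completeness, given an $\mathcal{E}$-Cauchy sequence $\{A_n\}$, the embedding $E\hookrightarrow l_\infty$ forces $\{A_n\}$ to be operator-norm Cauchy, so $A_n\to A$ in $B(H)$ for some $A$. For each fixed $n$ and each truncation level $K$, continuity of the first $K+1$ singular values under the operator-norm convergence $A_n-A_m\to A_n-A$, together with axiom (2) applied to $\chi_{[0,K]}$, yields $\|\mu(A_n-A)\chi_{[0,K]}\|_E\leq\liminf_m\|A_n-A_m\|_\mathcal{E}$. A Fatou-type passage $K\to\infty$ in $E$ then delivers $A\in\mathcal{E}$ with $A_n\to A$ in $\|\cdot\|_\mathcal{E}$. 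I expect the chief obstacle to be the submajorization-respect claim for general symmetric Banach ideals $E$: the finite Birkhoff decomposition is routine, but the infinite-sequence case requires a delicate truncation--density argument combining the symmetric-ideal axioms with the $E\hookrightarrow l_\infty$ embedding, and the analogous truncation step in the completeness argument must similarly confront the potential absence of order-continuity or Fatou behaviour on $E$.
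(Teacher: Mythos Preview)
The paper does not supply its own proof of this statement; it is quoted as a ``fundamental result'' from \cite{KS} (Kalton--Sukochev), so there is no in-paper argument to compare against. That said, your proposed route has a genuine gap, and it is exactly the gap that makes the Kalton--Sukochev theorem a substantial result rather than a routine verification.

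The crux of your plan for the triangle inequality is the claim that every symmetric Banach ideal $E\subset l_\infty$ (in the sense of the paper's three axioms) automatically respects Hardy--Littlewood submajorization, i.e.\ that $0\leq a\prec\prec b\in E$ forces $a\in E$ with $\|a\|_E\leq\|b\|_E$. This claim is \emph{false} in general. The paper itself distinguishes ``symmetric'' from ``fully symmetric'' precisely because the latter is strictly stronger, and the footnote attached to the reference \cite{Simon} warns that the submajorization-monotonicity assertion fails for arbitrary symmetric norms, pointing to explicit counterexamples on p.~83 of \cite{KS}. Your finite-sequence argument via Birkhoff's theorem is fine, but the passage to infinite sequences by truncation requires exactly the Fatou-type property you flag as a concern---and that property is not available for a general symmetric $E$. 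The same issue undermines your completeness argument: the step ``$\|\mu(A_n-A)\chi_{[0,K]}\|_E\leq c$ for all $K$ implies $\mu(A_n-A)\in E$ with norm $\leq c$'' again presumes a Fatou property that $E$ need not possess.

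The Kalton--Sukochev proof of the triangle inequality proceeds by an entirely different mechanism (uniform submajorization and a combinatorial decomposition of operators) that avoids any appeal to full symmetry or Fatou behaviour of $E$. If you want a self-contained argument, you will need to either import their techniques or add the hypothesis that $E$ is fully symmetric (in which case your outline does go through, but the conclusion is weaker than the theorem as stated).
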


\begin{defi} Symmetric operator ideal is said to be fully symmetric if, for every operator $A\in\mathcal{E}$ and $B\in B(H)$ such that $B\prec\prec A,$ we have $B\in\mathcal{E}$ and $\|B\|_{\mathcal{E}}\leq\|A\|_{\mathcal{E}}.$
\end{defi}

One should note that every fully symmetric operator ideal is a union of Marcin\-kiewicz operator ideals (see the text following Theorem II.5.7 of \cite{KPS}).

\begin{defi} Let $\mathcal{E}$ be a symmetric operator ideal. A linear functional $\varphi:\mathcal{E}\to\mathbb{C}$ is said to be a trace if $\varphi(U^{-1}AU)=\varphi(A)$ for every $A\in\mathcal{E}$ and every unitary $U\in B(H).$
\end{defi}

One can show that $\varphi(A)=\varphi(B)$ for every positive operators $A,B\in\mathcal{E}$ such that $\mu(A)=\mu(B).$

\begin{defi} A trace $\varphi:\mathcal{E}\to\mathbb{C}$ is called fully symmetric if $\varphi(B)\leq\varphi(A)$ for every positive operators $A,B\in\mathcal{E}$ with $B\prec\prec A.$
\end{defi}

\section{Relatively normal traces are dense}

In this section, we introduce an important class of relatively normal traces  (see Definition \ref{relatively normal def} below) and prove that they are weak$^*$ dense among all fully symmetric traces. The main result of this section is Theorem \ref{density}.

Let $\mathcal{E}$ be a fully symmetric operator ideal and let $\varphi$ be a fully symmetric trace on $\mathcal{E}.$ In what follows, $\mathcal{E}_+$ denotes the positive cone of $\mathcal{E}.$

% sleduyushaya lemma teper ne nujna --- est ssylka. no dokazatelstvo ya
% na vsyakiy sluchai ostavil.

%\begin{lem}\label{contraction} Let $A,B\in B(H)$ be positive compact operators such that $B\prec\prec A.$ There exists a linear operator $\mathbf{C}:B(H)\to B(H)$ (a positive contraction both in $B(H)$ and in $\mathcal{L}_1$) such that $B=\mathbf{C}(A).$
%\end{lem}
%\begin{proof} Suppose first that $A,B$ commute. There exist a basis $\{e_n\}_{n\geq0}$ of the Hilbert space $H$ such that both $A$ and $B$ are diagonal in this basis. Let $p_n$ be the projection on $e_n.$ Linear operator $\mathbf{D}:B(H)\to B(H)$ defined by setting $\mathbf{D}:C\to\sum_{n\geq0}p_nCp_n,$ $C\in B(H),$ is a contraction both in $\mathcal{L}_1$ and in $B(H).$ Let $a=\{(Ae_n,e_n)\}_{n\geq0}$ and $b=\{(Be_n,e_n)\}_{n\geq0}.$ Since $b\prec\prec a,$ it follows that there exists a linear operator $\mathbf{F}:l_{\infty}\to l_{\infty}$ (a positive contraction both in $l_{\infty}$ and in $l_1$) such that $\mathbf{F}(a)=b.$ Setting $\mathbf{C}=\mathbf{F}\mathbf{D},$ we conclude the assertion.

%Consider now the general case. There exists a partial isometry $U$ such that $B=UDU^*$ and such that $D$ commutes with $A.$  Since the mapping $\mathbf{H}:D\to UDU^*$ is a contraction both in $B(H)$ and in $\mathcal{L}_1,$ the assertion follows from above.
%\end{proof}

\begin{lem} Let $\mathcal{M}_{\psi}\subset\mathcal{E}$ be a Marcinkiewicz space and let $\varphi$ be a fully symmetric trace on $\mathcal{E}.$ The mapping $\varphi_{normal,\psi}:\mathcal{E}_+\to\mathbb{R}$ defined by setting
\begin{equation}\label{normal part definition}
\varphi_{normal,\psi}(A)=\sup\{\varphi(B):\ B\in\mathcal{M}_{\psi},\ 0\leq B\prec\prec A\},\quad 0\leq A\in\mathcal{E},
\end{equation}
is additive on the positive cone of $\mathcal{E}.$
\end{lem}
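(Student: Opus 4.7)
The plan is to establish super- and sub-additivity of $\varphi_{normal,\psi}$ separately on $\mathcal{E}_+$. Fix $A_1, A_2 \in \mathcal{E}_+$ throughout.

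For super-additivity, take arbitrary $B_1, B_2 \in \mathcal{M}_\psi$ with $0 \leq B_i \prec\prec A_i$. Write $H = H_1 \oplus H_2$ with each $H_i$ infinite-dimensional and unitarily embed each $B_i$ into $B(H_i) \subset B(H)$, producing $\tilde B_i \geq 0$ with $\mu(\tilde B_i) = \mu(B_i)$, with $\tilde B_1 \tilde B_2 = 0$, and (since $\varphi$ depends only on $\mu$ on the positive cone) with $\varphi(\tilde B_i) = \varphi(B_i)$. Set $B := \tilde B_1 + \tilde B_2 \in \mathcal{M}_\psi$; linearity yields $\varphi(B) = \varphi(B_1) + \varphi(B_2)$. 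Because the supports are orthogonal, $\mu(B)$ is the decreasing rearrangement of the union of $\mu(B_1)$ and $\mu(B_2)$, whence
\[ \int_0^t \mu(B) \;=\; \sup_{t_1 + t_2 = t} \Bigl( \int_0^{t_1} \mu(B_1) + \int_0^{t_2} \mu(B_2) \Bigr) \;\leq\; \sup_{t_1+t_2=t} \Bigl( \int_0^{t_1} \mu(A_1) + \int_0^{t_2} \mu(A_2) \Bigr). \]
The Ky--Fan-type inequality $\int_0^{t_1}\mu(A_1) + \int_0^{t_2}\mu(A_2) \leq \int_0^{t_1+t_2}\mu(A_1+A_2)$---obtained from the variational formula $\int_0^t \mu(A) = \sup\{{\rm Tr}(AP) : P^2 = P = P^*,\,{\rm Tr}(P) \leq t\}$ by taking the join $P_1 \vee P_2$ of the maximising projections and using ${\rm Tr}(A_i(P_1 \vee P_2)) \geq {\rm Tr}(A_i P_i)$---then gives $\mu(B) \prec\prec \mu(A_1+A_2)$, i.e. $B \prec\prec A_1 + A_2$. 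Hence $\varphi(B_1) + \varphi(B_2) = \varphi(B) \leq \varphi_{normal,\psi}(A_1+A_2)$, and taking the supremum over admissible $B_1, B_2$ delivers super-additivity.

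For sub-additivity, let $B \in \mathcal{M}_\psi$ with $0 \leq B \prec\prec A_1+A_2$. By \eqref{submaj ineq}, $\mu(B) \prec\prec \mu(A_1)+\mu(A_2)$. The key step is a decomposition lemma: write $\mu(B) = f_1 + f_2$ with $f_i \geq 0$ and $f_i \prec\prec \mu(A_i)$. This is obtained from the classical substochastic realisation of submajorisation: pick a substochastic $T$ with $\mu(B) = T(\mu(A_1)+\mu(A_2))$ and set $f_i := T\mu(A_i)$. Using a spectral decomposition $B = \sum_k \mu(k,B) P_k$ with rank-one pairwise orthogonal projections $P_k$, define $B_i := \sum_k f_i(k) P_k$. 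Then $B = B_1 + B_2$ and $0 \leq B_i \leq B \in \mathcal{M}_\psi$, so $B_i \in \mathcal{M}_\psi$; and the singular value sequence of $B_i$ is the decreasing rearrangement of $\{f_i(k)\}_k$, which submajorises $\mu(A_i)$. Hence $B_i \prec\prec A_i$, and linearity gives $\varphi(B) = \varphi(B_1) + \varphi(B_2) \leq \varphi_{normal,\psi}(A_1) + \varphi_{normal,\psi}(A_2)$. Taking the supremum over $B$ yields sub-additivity.

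The main obstacle is the decomposition step in sub-additivity---splitting the decreasing function $\mu(B)$, which is only submajorised (not pointwise dominated) by $\mu(A_1) + \mu(A_2)$, as $f_1 + f_2$ with $f_i \prec\prec \mu(A_i)$. This rests on the standard fact that submajorisation is implemented by substochastic operators, but some care is required to apply it in the present setting. Once this lemma is available, everything else is essentially algebraic: the super-additive direction is handled by the orthogonal-support construction combined with the Ky--Fan join trick, and the sub-additive direction is reduced to a routine spectral realisation.
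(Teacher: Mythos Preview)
Your proof is correct. The sub-additivity half is essentially the paper's argument: the paper invokes \cite[Theorem~2.2]{DDP} to produce an operator $\mathbf{C}$ (a positive contraction on $B(H)$ and on $\mathcal{L}_1$) with $B=\mathbf{C}(A_1+A_2)$ and sets $B_i=\mathbf{C}(A_i)$, whereas you carry out the same Calder\'on--Mityagin decomposition at the sequence level via a substochastic $T$ and then lift back through the spectral resolution of $B$. These are two presentations of the same idea. The super-additivity half, however, is genuinely different. The paper takes near-optimal $B_1,B_2$, uses only the crude bound $B_1+B_2\prec\prec\mu(A_1)+\mu(A_2)\prec\prec 2\sigma_{1/2}\mu(A_1+A_2)$ from \eqref{submaj ineq}, and then must invoke the dilation identity $\varphi_{normal,\psi}(2\sigma_{1/2}\mu(C))=\varphi_{normal,\psi}(C)$ (Lemma~\ref{sim dil rem}, a forward reference) to close the gap. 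Your route---place $B_1,B_2$ on orthogonal summands and use the Ky~Fan ``join of projections'' inequality $\int_0^{t_1}\mu(A_1)+\int_0^{t_2}\mu(A_2)\le\int_0^{t_1+t_2}\mu(A_1+A_2)$ to obtain $\tilde B_1+\tilde B_2\prec\prec A_1+A_2$ directly---is more self-contained: it avoids the dilation lemma entirely and sidesteps the question of applying that lemma to $\varphi_{normal,\psi}$ before its additivity has been established. The paper's approach, on the other hand, requires no orthogonal-decomposition bookkeeping and reuses an inequality already recorded in \eqref{submaj ineq}.
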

\begin{proof} Let $A_1,A_2\in\mathcal{E}_+.$ Let $B\in\mathcal{M}_{\psi}$ be such that $0\leq B\prec\prec A_1+A_2.$ By \cite[Theorem 2.2]{DDP}, there exists a linear operator $\mathbf{C}:B(H)\to B(H)$ (a positive contraction both in $B(H)$ and in $\mathcal{L}_1$) such that $B=\mathbf{C}(A_1+A_2).$ Setting $B_1=\mathbf{C}(A_1)\geq0$ and $B_2=\mathbf{C}(A_2)\geq0,$ we have $B=B_1+B_2.$ Therefore, $0\leq B_i\leq B\in\mathcal{M}_{\psi}$ and $B_i\prec\prec A_i.$ Hence, by definition \eqref{normal part definition},
$$\varphi(B)=\varphi(B_1)+\varphi(B_2)\leq\varphi_{normal,\psi}(A_1)+\varphi_{normal,\psi}(A_2).$$
Taking the supremum over all $B$ in question, we obtain
\begin{equation}\label{nfi sub}
\varphi_{normal,\psi}(A_1+A_2)\leq\varphi_{normal,\psi}(A_1)+\varphi_{normal,\psi}(A_2).
\end{equation}

Fix $\varepsilon>0.$ There exist $B_i\in\mathcal{M}_{\psi}$ such that $0\leq B_i\prec\prec A_i$ and $\varphi(B_i)>\varphi_{normal,\psi}(A_i)-\varepsilon.$ In particular, we have
\begin{equation}\label{nfi1}
\varphi_{normal,\psi}(A_1)+\varphi_{normal,\psi}(A_2)\leq 2\varepsilon+\varphi(B_1+B_2).
\end{equation}
Further, we have
$$B_1+B_2\prec\prec\mu(B_1)+\mu(B_2)\prec\prec\mu(A_1)+\mu(A_2)\prec\prec2\sigma_{1/2}\mu(A_1+A_2).$$
It follows from \eqref{nfi1} and definition \eqref{normal part definition} that
\begin{align*}
\varphi_{normal,\psi}(A_1)+\varphi_{normal,\psi}(A_2)\leq\\
\leq2\varepsilon+\varphi_{normal,\psi}(2\sigma_{1/2}\mu(A_1+A_2))=2\varepsilon+\varphi_{normal,\psi}(A_1+A_2).
\end{align*}
Here, the last equality follows from Lemma \ref{sim dil rem} below. Since $\varepsilon$ is arbitrarily small, we have
\begin{equation}\label{nfi super}
\varphi_{normal,\psi}(A_1)+\varphi_{normal,\psi}(A_2)\leq\varphi_{normal,\psi}(A_1+A_2).
\end{equation}
The assertion follows from \eqref{nfi sub} and \eqref{nfi super}.
\end{proof}

It is proved in the following lemma that $\varphi_{normal,\psi}$ can be viewed as a \lq\lq normal part\rq\rq of the trace $\varphi$ with respect to the subspace $\mathcal{M}_{\psi}.$

\begin{lem}\label{marcinkiewicz normal remark} The mapping $\varphi_{normal,\psi}:\mathcal{E}_+\to\mathbb{R}$ extends to a fully symmetric trace on $\mathcal{E}.$ Moreover, $\varphi=\varphi_{normal,\psi}$ on $\mathcal{M}_{\psi}$ and $(\varphi_{normal,\psi})_{normal,\psi}=\varphi_{normal,\psi}$ on $\mathcal{E}.$
\end{lem}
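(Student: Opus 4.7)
The plan is to extend $\varphi_{normal,\psi}$ from the positive cone to a linear functional on $\mathcal{E}$ by Jordan decomposition, and then to verify in turn the trace property, full symmetry, the identity $\varphi = \varphi_{normal,\psi}$ on $\mathcal{M}_{\psi}$, and finally idempotence. Positive homogeneity of $\varphi_{normal,\psi}$ on $\mathcal{E}_+$ is immediate from the definition, since for $\lambda > 0$ the map $B \mapsto \lambda B$ is a bijection between the corresponding admissible sets and $\varphi$ is linear. Combined with the additivity established in the preceding lemma, $\varphi_{normal,\psi}$ is a positively homogeneous additive map on $\mathcal{E}_+$, and the standard Jordan decomposition recipe (set $\varphi_{normal,\psi}(A) := \varphi_{normal,\psi}(A_+) - \varphi_{normal,\psi}(A_-)$ for self-adjoint $A$, then extend by $\mathbb{C}$-linearity) produces a well-defined linear functional on $\mathcal{E}$; independence of the particular positive decomposition $A = C_1 - C_2$ is forced by additivity applied to the identity $A_+ + C_2 = A_- + C_1$.

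For the trace property, observe that conjugation $B \mapsto UBU^{-1}$ by a unitary $U$ is a bijection of $\{B \in \mathcal{M}_{\psi} : 0 \leq B \prec\prec A\}$ onto $\{B \in \mathcal{M}_{\psi} : 0 \leq B \prec\prec U^{-1}AU\}$, since $\mathcal{M}_{\psi}$ and $\prec\prec$ depend only on singular values, and $\varphi$ itself is a trace; so the two defining suprema agree for $A \geq 0$, and linearity extends the invariance to all of $\mathcal{E}$. Full symmetry follows from transitivity of $\prec\prec$: if $0 \leq B \prec\prec A$ in $\mathcal{E}$, then every $C \in \mathcal{M}_{\psi}$ admissible in the supremum defining $\varphi_{normal,\psi}(B)$ is also admissible in the supremum defining $\varphi_{normal,\psi}(A)$. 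To verify $\varphi = \varphi_{normal,\psi}$ on $\mathcal{M}_{\psi}$, given $0 \leq A \in \mathcal{M}_{\psi}$ we obtain $\varphi(A) \leq \varphi_{normal,\psi}(A)$ by taking $B := A$ in the supremum, while the reverse inequality follows from full symmetry of $\varphi$ on $\mathcal{E} \supset \mathcal{M}_{\psi}$ applied to each admissible $B \prec\prec A$; linearity then gives equality throughout $\mathcal{M}_{\psi}$.

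The idempotence $(\varphi_{normal,\psi})_{normal,\psi} = \varphi_{normal,\psi}$ on $\mathcal{E}_+$ is then immediate: inside the supremum defining $(\varphi_{normal,\psi})_{normal,\psi}(A)$, each admissible $B$ lies in $\mathcal{M}_{\psi}$ and therefore satisfies $\varphi_{normal,\psi}(B) = \varphi(B)$ by the identity just proved, so the supremum collapses back to that defining $\varphi_{normal,\psi}(A)$; linearity propagates this to all of $\mathcal{E}$. None of the individual steps in this lemma is deep: the real content of the construction of $\varphi_{normal,\psi}$ is already concentrated in the additivity on the positive cone supplied by the preceding lemma (via the factorization of submajorization-dominated operators from \cite{DDP} and the chain \eqref{submaj ineq}), and what remains here is essentially bookkeeping with the Jordan decomposition and transitivity of $\prec\prec$.
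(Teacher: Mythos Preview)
Your proof is correct and follows essentially the same route as the paper's: extension by linearity from the positive cone, full symmetry via transitivity of $\prec\prec$, and the identity on $\mathcal{M}_{\psi}$ by taking $B=A$ together with full symmetry of $\varphi$. The only noteworthy difference is in the idempotence step: the paper verifies the two inequalities separately using an approximating sequence $B_m\in\mathcal{M}_{\psi}$ with $\varphi(B_m)\to\varphi_{normal,\psi}(A)$, whereas you observe more directly that since $\varphi_{normal,\psi}=\varphi$ on $\mathcal{M}_{\psi}$, the suprema defining $(\varphi_{normal,\psi})_{normal,\psi}(A)$ and $\varphi_{normal,\psi}(A)$ are literally identical --- a slightly cleaner argument for the same fact.
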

\begin{proof} Every additive functional on $\mathcal{E}_+$ uniquely extends to a linear functional on $\mathcal{E}.$ In particular, so does $\varphi_{normal,\psi}:\mathcal{E}_+\to\mathbb{R}.$

Let $A_1,A_2\in\mathcal{E}$ be positive operators such that $A_2\prec\prec A_1.$ It follows that
$$\{\varphi(B):\ B\in\mathcal{M}_{\psi},\ 0\leq B\prec\prec A_2\}\subset \{\varphi(B):\ B\in\mathcal{M}_{\psi},\ 0\leq B\prec\prec A_1\}.$$
Therefore, $\varphi_{normal,\psi}(A_2)\leq\varphi_{normal,\psi}(A_1).$ Hence, $\varphi_{normal,\psi}$ is a fully symmetric trace on $\mathcal{E}.$

The second assertion is obvious. In order to prove the third assertion, fix a positive operator $A\in\mathcal{E}.$ By definition, $(\varphi_{normal,\psi})_{normal,\psi}(A)\leq\varphi_{normal,\psi}(A).$ Select $B_m\in\mathcal{M}_{\psi}$ such that $0\leq B_m\prec\prec A$ and such that $\varphi(B_m)\to\varphi_{normal,\psi}(A).$ Clearly, $\varphi_{normal,\psi}(B_m)=\varphi(B_m).$ Thus, $\varphi_{normal,\psi}(B_m)\to\varphi_{normal,\psi}(A).$ Therefore, $(\varphi_{normal,\psi})_{normal,\psi}(A)\geq\varphi_{normal,\psi}(A),$ and the third assertion is proved.
\end{proof}

\begin{defi}\label{relatively normal def} A fully symmetric trace $\varphi$ on $\mathcal{E}$ is called relatively normal if there exists a Marcinkiewicz space $\mathcal{M}_{\psi}\subset\mathcal{E}$ such that $\varphi=\varphi_{normal,\psi}.$
\end{defi}

\begin{thm}\label{density} Relatively normal traces on $\mathcal{E}$ are weak$^*$ dense in the set of all fully symmetric traces on $\mathcal{E}.$
\end{thm}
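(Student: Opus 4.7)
The plan is to show that every basic weak$^*$ neighborhood of a fully symmetric trace $\varphi$ on $\mathcal{E}$ contains a relatively normal trace of the form $\varphi_{normal,\psi}$ for a suitably chosen Marcinkiewicz subspace $\mathcal{M}_{\psi}\subset\mathcal{E}$. Fix therefore $A_1,\ldots,A_n\in\mathcal{E}$ and $\varepsilon>0$, and consider the basic neighborhood
$$U=\{\tau:\ |\tau(A_j)-\varphi(A_j)|<\varepsilon,\ j=1,\ldots,n\}.$$
The task reduces to exhibiting a single relatively normal trace lying in $U$.

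First, I would package the finitely many test operators into one positive dominating element. Set $A:=|A_1|+\cdots+|A_n|\in\mathcal{E}_+$. Since $0\leq|A_j|\leq A$ as bounded operators, monotonicity of the singular value function (for positive operators, $B\leq C$ implies $\mu(B)\leq\mu(C)$) gives $\mu(A_j)=\mu(|A_j|)\leq\mu(A)$ for each $j$.

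Second, I would invoke the structural fact recorded in the excerpt (following the definition of fully symmetric ideal and quoted from \cite{KPS}): every fully symmetric operator ideal is a union of Marcinkiewicz operator ideals. Applying this to $A$, pick a Marcinkiewicz space $\mathcal{M}_{\psi}\subset\mathcal{E}$ with $A\in\mathcal{M}_{\psi}$. Because $\mathcal{M}_{\psi}$ is symmetric and $\mu(A_j)\leq\mu(A)$, each $A_j$ also belongs to $\mathcal{M}_{\psi}$.

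Finally, Lemma \ref{marcinkiewicz normal remark} does all the remaining work: $\varphi_{normal,\psi}$ extends to a fully symmetric trace on all of $\mathcal{E}$, it satisfies the idempotency $(\varphi_{normal,\psi})_{normal,\psi}=\varphi_{normal,\psi}$ required by Definition \ref{relatively normal def} (so it is relatively normal with respect to the very same $\mathcal{M}_{\psi}$), and it agrees with $\varphi$ on $\mathcal{M}_{\psi}$. In particular $\varphi_{normal,\psi}(A_j)=\varphi(A_j)$ for $j=1,\ldots,n$, so $\varphi_{normal,\psi}\in U$, which is exactly the desired density statement. The only non-formal ingredient is the ability to realise prescribed elements of $\mathcal{E}$ inside a Marcinkiewicz subspace; this is precisely the cited structural fact, and once it is available the argument is essentially a direct reading of Lemma \ref{marcinkiewicz normal remark}. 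No serious technical obstacle is anticipated.
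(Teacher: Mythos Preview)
Your proposal is correct and follows essentially the same approach as the paper: locate the finitely many test operators inside a single Marcinkiewicz subspace $\mathcal{M}_{\psi}\subset\mathcal{E}$ and then invoke Lemma~\ref{marcinkiewicz normal remark} to conclude $\varphi_{normal,\psi}(A_j)=\varphi(A_j)$. The only cosmetic differences are that the paper builds $\psi$ explicitly (taking $\psi'=\sum_k\mu(A_k)$) rather than appealing to the abstract union decomposition, and phrases the argument as weak$^*$ convergence of the net $\{\varphi_{normal,\psi_x}\}_{x\in E_+^{\downarrow}}$ rather than as a neighborhood-by-neighborhood density check.
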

\begin{proof} The set $E_+^{\downarrow}=\{\mu(A),\ A\in\mathcal{E}\},$ equipped with the partial ordering given by the Hardy-Littlewood majorization is a directed set. For every $x\in E_+^{\downarrow},$ let $\psi_x:\mathbb{R}_+\to\mathbb{R}_+$ be a concave increasing function such that $\psi_x'=x.$ For every given fully symmetric functional $\varphi,$ consider the net $\{\varphi_{n,\psi_x}\in\mathcal{E}^*,\ x\in E_+^{\downarrow}\}.$ We claim that this net weak$^*$ converges to the functional $\varphi.$

Recall that the base of weak$^*$ topology (that is, $\sigma(\mathcal{E}^*,\mathcal{E})$) is formed by the sets
$$N(A_1,\cdots,A_m,\varepsilon)=\{\theta\in\mathcal{E}^*:\ |\theta(A_k)|<\varepsilon,\ 1\leq k\leq m\}.$$

Fix some neighborhood $U$ of $0$ in the weak$^*$ topology. Select $\varepsilon>0$ and operators $0\leq A_k\in\mathcal{E}$ such that
$$N(A_1,\cdots,A_m,\varepsilon)\subset U.$$
Set $y=\sum_{k=1}^m\mu(A_k).$ It is clear that, for every $x\in E_+^{\downarrow}$ such that $y\prec\prec x,$ we have $A_k\in\mathcal{M}_{\psi_x}.$ It follows from Lemma \ref{marcinkiewicz normal remark} that $(\varphi_{n,\psi_x}-\varphi)(A_k)=0.$ Therefore,
$$\varphi-\varphi_{n,\psi_x}\in \{\theta\in\mathcal{E}^*:\ |\theta(A_k)|=0,\ 1\leq k\leq m\}\subset U.$$
\end{proof}

\section{Dixmier traces}

In this section, we introduce the concept of Dixmier trace on symmetric operator ideals. The main result of this section is Theorem \ref{linearity criterion}.

\subsection{Extension of states}

As usual, a state on the algebra $l_{\infty}$ is a positive linear functional $\omega$ such that $\omega(1)=1.$ A state $\omega$ is called singular if it vanishes on all finitely supported sequences. A state $\omega$ is called dilation invariant if $\omega\circ\sigma_n=\omega,$ $n\in\mathbb{N}.$

\begin{lem}\label{extension lemma} Every state $\omega$ on the algebra $l_{\infty}$ admits an extension to an additive mapping $\omega$ from the set $\mathfrak{L}_+$ of the positive (unbounded) sequences to $\mathbb{R}_+\cup\{\infty\}.$ This extension is defined by setting
\begin{equation}\label{omega extension}
\omega(x)=\sup\{\omega(y),\ 0\leq y\leq x,\ y\in l_{\infty}\},\quad 0\leq x\in\mathfrak{L}_+.
\end{equation}
\end{lem}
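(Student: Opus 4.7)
The task is to verify that formula \eqref{omega extension} defines an additive mapping on $\mathfrak{L}_+$. That it extends $\omega$ on $l_\infty$ is immediate: for $x\in l_\infty$, taking $y=x$ is admissible in the supremum, and positivity of $\omega$ on $l_\infty$ together with $\omega(x-y)\geq 0$ for $0\leq y\leq x$ shows this choice is maximal. Monotonicity of the extended $\omega$ on $\mathfrak{L}_+$ is built into the definition. The content of the lemma is additivity, which I would prove by splitting into the two one-sided inequalities.

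For super-additivity, I take arbitrary $y_1,y_2\in l_\infty$ with $0\leq y_i\leq x_i$. Then $y_1+y_2\in l_\infty$ and $y_1+y_2\leq x_1+x_2$, so linearity of $\omega$ on $l_\infty$ gives
\[
\omega(y_1)+\omega(y_2)=\omega(y_1+y_2)\leq \omega(x_1+x_2).
\]
Taking suprema over $y_1$ and $y_2$ separately yields $\omega(x_1)+\omega(x_2)\leq\omega(x_1+x_2)$. If one of $\omega(x_i)$ is infinite, monotonicity applied to $x_1+x_2\geq x_i$ immediately gives both sides equal to $\infty$, so no care is needed in that case.

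For sub-additivity, the key device is a coordinatewise decomposition of the admissible test sequences. Given $0\leq y\leq x_1+x_2$ with $y\in l_\infty$, set $y_1(k)=\min\{y(k),x_1(k)\}$ and $y_2(k)=y(k)-y_1(k)$ for every $k$. Then $y_1,y_2\geq 0$, both lie in $l_\infty$ (being dominated by $y$), $y_1\leq x_1$ by construction, and $y_2(k)=(y(k)-x_1(k))_+\leq x_2(k)$ because $y(k)\leq x_1(k)+x_2(k)$. Linearity of $\omega$ on $l_\infty$ then gives $\omega(y)=\omega(y_1)+\omega(y_2)\leq\omega(x_1)+\omega(x_2)$, and taking the supremum over $y$ yields the reverse inequality $\omega(x_1+x_2)\leq\omega(x_1)+\omega(x_2)$.

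The only conceptually delicate point is the pointwise splitting $y=\min(y,x_1)+(y-x_1)_+$ used in the sub-additive half; once one sees that this splitting stays inside $l_\infty$ coordinatewise (because each summand is pointwise dominated by $y$), the argument is routine, and I do not expect any further obstacle. The handling of the value $+\infty$ is absorbed cleanly into monotonicity.
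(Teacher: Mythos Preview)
Your proof is correct and follows essentially the same approach as the paper. The only cosmetic differences are that you give the explicit pointwise splitting $y_1=\min(y,x_1)$, $y_2=(y-x_1)_+$ where the paper merely asserts such a decomposition exists, and you take suprema directly for super-additivity where the paper uses an equivalent $\varepsilon$-argument.
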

\begin{proof} Let $x_1,x_2\in\mathfrak{L}_+.$ If $0\leq y\in l_{\infty}$ is such that $y\leq x_1+x_2,$ then there exist positive elements $y_1,y_2\in l_{\infty}$ such that $y=y_1+y_2,$ $y_1\leq x_1$ and $y_2\leq x_2.$ It follows from \eqref{omega extension} that
$$\omega(y)=\omega(y_1)+\omega(y_2)\leq\omega(x_1)+\omega(x_2).$$
Taking the supremum over all such $y,$ we obtain
\begin{equation}\label{omega sub}
\omega(x_1+x_2)\leq\omega(x_1)+\omega(x_2).
\end{equation}

Now, we prove the converse inequality. The latter becomes trivial if $\omega(x_1)=\infty$ or $\omega(x_2)=\infty.$ Thus, we may assume without loss of generality that both $\omega(x_1)<\infty$ and $\omega(x_2)<\infty.$ Fix $\varepsilon>0.$ Let $y_i\in l_{\infty}$ be such that $0\leq y_i\leq x_i$ and $\omega(y_i)>\omega(x_i)-\varepsilon.$ It follows from \eqref{omega extension} that
$$\omega(x_1)+\omega(x_2)\leq 2\varepsilon+\omega(y_1)+\omega(y_2)=2\varepsilon+\omega(y_1+y_2)\leq 2\varepsilon+\omega(x_1+x_2).$$
Since $\varepsilon$ is arbitrarily small, we obtain
\begin{equation}\label{omega super}
\omega(x_1)+\omega(x_2)\leq\omega(x_1+x_2).
\end{equation}
The assertion follows from \eqref{omega sub} and \eqref{omega super}.
\end{proof}

It follows directly from the definition \eqref{omega extension} that the extension $\omega:\mathfrak{L}_+\to\mathbb{R}\cup\{\infty\}$ defined in Lemma \ref{extension lemma} is dilation invariant if and only if $\omega:l_{\infty}\to\mathbb{R}$ is dilation invariant.

\begin{lem}\label{first simple lemma} For every state $\omega$ on the algebra $l_{\infty}$ and for every $x\in\mathfrak{L}_+,$ we have $$\omega(\min\{n,x\})\to\omega(x)$$
as $n\to\infty.$
\end{lem}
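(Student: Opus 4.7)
The plan is to exploit the two monotonicity properties of the extended $\omega$ (on $l_\infty$ it is monotone because it is a positive functional, on $\mathfrak{L}_+$ it is monotone directly from the supremum definition \eqref{omega extension}) together with the elementary observation that any bounded $y\in l_\infty$ with $y\le x$ lies below $\min\{n,x\}$ as soon as $n\ge \|y\|_\infty$.

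First I would record the easy upper bound: since $\min\{n,x\}\in l_\infty$ and $\min\{n,x\}\le x$, the defining formula \eqref{omega extension} gives $\omega(\min\{n,x\})\le \omega(x)$ for every $n$. Moreover, the sequence $\{\min\{n,x\}\}_{n\ge 1}$ is pointwise increasing, hence (again by monotonicity of $\omega$ on $l_\infty$) the real sequence $\omega(\min\{n,x\})$ is nondecreasing in $n$, so $\lim_{n\to\infty}\omega(\min\{n,x\})$ exists in $[0,\infty]$ and is bounded above by $\omega(x)$.

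For the matching lower bound I would split into the two natural cases. If $\omega(x)<\infty$, fix $\varepsilon>0$ and use \eqref{omega extension} to pick $y\in l_\infty$ with $0\le y\le x$ and $\omega(y)>\omega(x)-\varepsilon$. Choose any integer $N\ge \|y\|_\infty$; then $y\le N$ and $y\le x$ jointly give $y\le \min\{N,x\}$, so $\omega(y)\le \omega(\min\{N,x\})$ (this is ordinary monotonicity of the state $\omega$ on $l_\infty$). For every $n\ge N$ we therefore have
$$\omega(x)-\varepsilon<\omega(y)\le \omega(\min\{N,x\})\le \omega(\min\{n,x\})\le \omega(x),$$
which yields $\omega(\min\{n,x\})\to \omega(x)$ since $\varepsilon$ was arbitrary. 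If $\omega(x)=\infty$, the same recipe works with $M>0$ in place of $\omega(x)-\varepsilon$: pick $y\in l_\infty$, $0\le y\le x$, with $\omega(y)>M$, take $N\ge \|y\|_\infty$, and conclude $\omega(\min\{n,x\})>M$ for all $n\ge N$, giving divergence to $+\infty=\omega(x)$.

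There is essentially no obstacle: the only point to be careful about is to distinguish the finite and infinite cases in the convergence statement, since $\omega$ takes values in $\mathbb{R}_+\cup\{\infty\}$ on $\mathfrak{L}_+$. Everything else is a direct application of \eqref{omega extension} and the trivial inequality $y\le \min\{\|y\|_\infty,x\}$ valid whenever $0\le y\le x$.
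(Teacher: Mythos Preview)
Your proof is correct and follows essentially the same approach as the paper: both rely on the key inequality $y\le\min\{\|y\|_\infty,x\}$ for $0\le y\le x$ with $y\in l_\infty$, combined with monotonicity of $\omega$. The paper's version picks a sequence $x_n$ with $\omega(x_n)\to\omega(x)$ and then dichotomizes on whether $\|x_n\|_\infty$ is bounded, whereas you argue directly with a single $y$ for each $\varepsilon$ (resp.\ $M$) and make the finite/infinite case split explicit; this is arguably a cleaner organization of the same idea.
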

\begin{proof} Fix a sequence $\{x_n\}_{n\geq0}\subset l_{\infty}$ such that $x_n\leq x$ for every $n\geq0,$ and such that $\omega(x_n)\to\omega(x)$ as $n\to\infty.$ Evidently, $x_n\leq\min\{\|x_n\|_{\infty},x\}\leq x.$ It follows that $\omega(\min\{\|x_n\|_{\infty},x\})\to\omega(x)$ as $n\to\infty.$ If $\|x_n\|_{\infty}\to\infty,$ then we conclude the proof. If $\|x_n\|_{\infty}\leq C$ for $n\geq0,$ then $\omega(x)=\omega(\min\{n,x\})$ for every $n\geq C$ and the assertion follows.
\end{proof}

\begin{lem}\label{simple lemma} Let $\omega$ be a state on the algebra $l_{\infty}.$ Let $0\leq z\in l_{\infty}$ be such that $\omega(z)=0$ and let $u\in\mathfrak{L}_+$ be such that $\omega(u)<\infty.$ It follows that $\omega(uz)=0.$
\end{lem}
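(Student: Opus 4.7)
My plan is to use the truncation $u_n := \min\{u,n\} \in l_{\infty}$, in the spirit of Lemma \ref{first simple lemma}, to reduce to the bounded case. Since $u_n \in l_{\infty}$ and $u - u_n \in \mathfrak{L}_+$, the additivity established in Lemma \ref{extension lemma} gives $\omega(uz) = \omega(u_n z) + \omega((u-u_n)z)$, so it suffices to control the two summands separately.

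For the first summand, positivity of $\omega$ on $l_{\infty}$ plus the pointwise inequality $u_n z \leq \|u_n\|_{\infty} z = n z$ yields $\omega(u_n z) \leq n \omega(z) = 0$, and since $u_n z \geq 0$ we get $\omega(u_n z) = 0$. For the second summand, the pointwise inequality $(u-u_n)z \leq \|z\|_{\infty}(u-u_n)$ combined with the obvious monotonicity of the extension \eqref{omega extension} gives
\[
\omega((u-u_n)z) \leq \|z\|_{\infty}\, \omega(u - u_n).
\]
Applying additivity once more to the decomposition $u = u_n + (u-u_n)$, and using that $\omega(u_n) < \infty$ (since $u_n \in l_\infty$) so that we are allowed to subtract, we obtain $\omega(u - u_n) = \omega(u) - \omega(u_n)$, which tends to $0$ as $n \to \infty$ by Lemma \ref{first simple lemma} (here we use the hypothesis $\omega(u) < \infty$).

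Combining the two estimates,
\[
0 \leq \omega(uz) \leq \|z\|_{\infty}\bigl(\omega(u) - \omega(u_n)\bigr) \xrightarrow[n \to \infty]{} 0,
\]
so $\omega(uz) = 0$, as required.

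The only place that requires a little care is checking that the elementary facts about the extension (monotonicity on $\mathfrak{L}_+$, and the right to subtract $\omega(u_n)$ from $\omega(u)$) really follow from Lemma \ref{extension lemma}; both are direct consequences of additivity together with nonnegativity of $\omega$ on $\mathfrak{L}_+$. The assumption $\omega(u) < \infty$ is essential precisely to make the telescoping argument $\omega(u-u_n) = \omega(u) - \omega(u_n)$ legitimate, and without it the conclusion can indeed fail. I do not expect any serious obstacle in the remainder of the argument.
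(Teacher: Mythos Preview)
Your proof is correct and follows essentially the same route as the paper: both truncate via $u_n=\min\{u,n\}$, use Lemma \ref{first simple lemma} together with $\omega(u)<\infty$ to get $\omega((u-n)_+)\to 0$, and bound the tail contribution by $\|z\|_{\infty}\,\omega((u-n)_+)$. The only cosmetic difference is that you split $uz=u_nz+(u-u_n)z$ exactly and invoke additivity, whereas the paper uses the single pointwise inequality $uz\leq nz+\|z\|_{\infty}(u-n)_+$ and monotonicity; the underlying idea is identical.
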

\begin{proof} It is clear that $u=\min\{n,u\}+(u-n)_+.$ It follows from  Lemma \ref{first simple lemma} that the sequence $\omega(\min\{n,u\})$ converges to $\omega(u).$ Since $\omega(u)<\infty,$ it follows that $\omega((u-n)_+)\to0.$ On the other hand, we have $uz\leq nz+\|z\|_{\infty}(u-n)_+.$ Since $\omega(z)=0,$ it follows that $\omega(uz)\leq\|z\|_{\infty}\omega((u-n)_+).$ Passing $n\to\infty,$ we conclude the proof.
\end{proof}

\subsection{Dixmier traces}

\begin{defi}\label{dixmier trace definition} Let $\mathcal{E}$ be a fully symmetric operator ideal. For a given concave increasing function $\psi$ with $\mathcal{M}_{\psi}\subset\mathcal{E}$ and given dilation invariant singular state $\omega$ on $l_{\infty}$  define a mapping $\tau_{\omega}:\mathcal{E}_+\to\mathbb{R}_+\cup\{\infty\}$ by setting
$$\tau_{\omega}(A)=\omega(\frac1{\psi(n+1)}\sum_{k=0}^n\mu(k,A)),\quad0\leq A\in\mathcal{E},$$
where the extension of $\omega$ to $\mathcal{L}_+$ is given by Lemma \ref{extension lemma}. If the mapping $\tau_{\omega}$ is finite and additive on $\mathcal{E}_+,$ then its linear extension to $\mathcal{E}$ is called a Dixmier trace on $\mathcal{E}.$
\end{defi}

\begin{lem}\label{sim dil rem} Let $\mathcal{E}$ be a symmetric operator ideal and let $\varphi$ be a trace on $\mathcal{E}.$ For every positive $A\in\mathcal{E},$ we have $\varphi(2\sigma_{1/2}\mu(A))=\varphi(A).$
\end{lem}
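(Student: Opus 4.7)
The plan is to use the trace property twice together with a splitting of $A$ into even- and odd-indexed diagonal pieces. By the observation following the definition of trace in the excerpt, $\varphi$ depends only on the singular value function, so I may assume without loss of generality that $A$ is the diagonal operator with $Ae_n = \mu(n,A)e_n$ for a fixed orthonormal basis $\{e_n\}_{n\geq 0}$ of $H$. Likewise, I interpret $\varphi(2\sigma_{1/2}\mu(A))$ as $\varphi(B)$, where $B$ is the diagonal operator $Be_n = (\mu(2n,A)+\mu(2n+1,A))e_n$; the sequence $2\sigma_{1/2}\mu(A)$ is nonincreasing because $\mu(A)$ is, so this is an honest singular value function.

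The central step is the following decomposition. Let $P_e$ and $P_o$ be the orthogonal projections onto $\overline{\mathrm{span}}\{e_{2n}\}_{n\geq 0}$ and $\overline{\mathrm{span}}\{e_{2n+1}\}_{n\geq 0}$ respectively. Then $A = AP_e + AP_o$, and by linearity
$$\varphi(A) = \varphi(AP_e) + \varphi(AP_o).$$
Simultaneously I would write $B = B_1 + B_2$, where $B_1 e_n = \mu(2n,A) e_n$ and $B_2 e_n = \mu(2n+1,A) e_n$, so that $\varphi(B) = \varphi(B_1) + \varphi(B_2)$. The singular value functions now match pairwise: $\mu(AP_e) = \mu(B_1) = (\mu(0,A), \mu(2,A), \mu(4,A), \ldots)$ (the zero entries of $AP_e$ interspersed between these values are of course dropped when passing to $\mu$), and likewise $\mu(AP_o) = \mu(B_2) = (\mu(1,A), \mu(3,A), \mu(5,A), \ldots)$. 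Since $\varphi$ takes equal values on positive operators with equal singular value functions, we obtain $\varphi(AP_e) = \varphi(B_1)$ and $\varphi(AP_o) = \varphi(B_2)$. Combining the two displays yields $\varphi(A) = \varphi(B) = \varphi(2\sigma_{1/2}\mu(A))$.

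I do not anticipate a serious obstacle; the argument is essentially bookkeeping once the even/odd decomposition is spotted, and it uses only the trace property (not full symmetry of $\varphi$). The one point requiring mild care is the identification $\mu(AP_e) = \mu(B_1)$, which rests on the elementary observation that inserting zeros between the terms of a nonnegative decreasing sequence does not alter its decreasing rearrangement.
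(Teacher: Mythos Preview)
Your proof is correct and follows essentially the same route as the paper's: both reduce to the diagonal case, split $A$ into its even- and odd-indexed diagonal parts, split $2\sigma_{1/2}\mu(A)$ analogously, and match the summands via equality of singular value sequences. Your write-up is simply a more explicit version of the paper's two-line argument.
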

\begin{proof} Without loss of generality, we can take $A=\mu(A).$ Thus,
$$A={\rm diag}(\mu(0,A),0,\mu(2,A),0,\cdots)+{\rm diag}(0,\mu(1,A),0,\mu(3,A),\cdots).$$
Similarly,
$$2\sigma_{1/2}\mu(A)={\rm diag}(\mu(0,A),\mu(2,A),\cdots)+{\rm diag}(\mu(1,A),\mu(3,A),\cdots).$$
The assertion follows immediately.
\end{proof}

The following theorem gives a necessary and sufficient condition for the mapping $\tau_{\omega}$ to be a trace on $\mathcal{E}.$

\begin{thm}\label{linearity criterion} Let $\mathcal{E}$ be a symmetric operator ideal and let $\mathcal{M}_{\psi}\subset\mathcal{E}.$ Let $\omega$ be a singular state on the algebra $l_{\infty}$ such that $\tau_{\omega}$ is finite on $\mathcal{E}_+.$ The mapping $\tau_{\omega}$ is additive on $\mathcal{E}_+$ if and only if
\begin{equation}\label{linear if}
\omega(\frac{\psi(2n+1)}{\psi(n+1)})=1.
\end{equation}
\end{thm}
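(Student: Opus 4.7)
My plan splits into sufficiency and necessity, and the common technical ingredient for both is a clean formula relating $g(n,A+B):=\frac{1}{\psi(n+1)}\sum_{k=0}^{n}\mu(k,A+B)$ to its ``doubled'' cousin $g(2n+1,A+B)$ via the ratio $R(n):=\psi(2n+2)/\psi(n+1)$. By concavity, $R$ is bounded in $[1,2]$, and $R(n)-\psi(2n+1)/\psi(n+1)=(\psi(2n+2)-\psi(2n+1))/\psi(n+1)=O(1/n)\in c_0$, so the hypothesis is equivalent to $\omega(R)=1$, i.e.\ $\omega(R-1)=0$.

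For sufficiency I would first extract subadditivity from the left half of \eqref{submaj ineq}, which yields $g(n,A+B)\le g(n,A)+g(n,B)$; the extension of $\omega$ from Lemma \ref{extension lemma} is additive, so $\tau_\omega(A+B)\le\tau_\omega(A)+\tau_\omega(B)$. The right half of \eqref{submaj ineq}, combined with the elementary identity $\sum_{k=0}^{n}2\sigma_{1/2}x(k)=\sum_{k=0}^{2n+1}x(k)$, delivers the key inequality $g(n,A)+g(n,B)\le R(n)\,g(2n+1,A+B)$. Since $\mu(\cdot,A+B)$ is decreasing, $g(2n+1,A+B)\le 2g(n,A+B)$, whence $\omega$ of the right-hand side is at most $2\tau_\omega(A+B)<\infty$; Lemma \ref{simple lemma} applied with the nonnegative bounded $z:=R-1$ and $u:=g(2n+1,A+B)$ then factors $R$ out:
$$\omega\bigl(R\,g(2n+1,A+B)\bigr)=\omega\bigl(g(2n+1,A+B)\bigr).$$

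The main obstacle is now to prove $\omega(g(2n+1,A+B))=\tau_\omega(A+B)$. My plan is to upgrade dilation invariance $\omega\circ\sigma_2=\omega$ from $l_\infty$ to $\mathfrak{L}_+$ via the sup-definition of Lemma \ref{extension lemma}: given $0\le z\le\sigma_2 y$ in $l_\infty$, the sequence $z'(n):=\max\{z(2n),z(2n+1)\}$ satisfies $z'\le y$ and $\sigma_2 z'\ge z$, so $\omega(z)\le\omega(z')\le\omega(y)$, giving $\omega(\sigma_2 y)=\omega(y)$ in $\mathfrak{L}_+$. Applying this with $y=(g(2n+1,A+B))_n$ reduces the problem to comparing $\sigma_2 y$ with $(g(n,A+B))_n$: they agree at odd indices and differ at even indices $n=2m$ by $g(2m+1,A+B)-g(2m,A+B)$. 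A short estimate using concavity of $\psi$ (bounding $\psi(2m+2)-\psi(2m+1)$ by $\psi(2m+1)/(2m+1)$) and the compactness of $A+B\in\mathcal{E}$ (so $\mu(k,A+B)\to 0$) shows this signed sequence is uniformly bounded and tends to $0$, hence lies in $c_0$. Splitting it into positive and negative parts and using singularity of $\omega$ (which vanishes on $c_0$) together with additivity of $\omega$ on $\mathfrak{L}_+$ yields the desired identification, and superadditivity follows.

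For necessity I would test additivity against the explicit pair $A,B\in\mathcal{E}_+$ with orthogonal supports and $\mu(A)=\mu(B)=(\psi(k+1)-\psi(k))_k$, so that $g(n,A)\equiv 1$, $\tau_\omega(A)=\tau_\omega(B)=1$ and $\mu(A+B)=\sigma_2\mu(A)$, giving $g(2m+1,A+B)=2\psi(m+1)/\psi(2m+2)$. The dilation-plus-$c_0$ argument from the preceding paragraph (which does not use the hypothesis) identifies $\omega((g(2m+1,A+B))_m)$ with $\tau_\omega(A+B)$, so additivity forces $\omega(\psi(m+1)/\psi(2m+2))=1$. A final application of Lemma \ref{simple lemma}, now with $z=1-\psi(m+1)/\psi(2m+2)\ge 0$ and $u=\psi(2m+2)/\psi(m+1)\in l_\infty$, converts this into $\omega(\psi(2m+2)/\psi(m+1))=1$, and the initial $c_0$-comparison between $\psi(2m+2)$ and $\psi(2m+1)$ yields the claimed $\omega(\psi(2n+1)/\psi(n+1))=1$.
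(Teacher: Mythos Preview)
Your argument is correct and, for the sufficiency direction, essentially mirrors the paper's: both use the submajorization inequality \eqref{submaj ineq} for subadditivity, then combine Lemma~\ref{simple lemma}, dilation invariance of the extended $\omega$, and a $c_0$-perturbation for superadditivity. The only cosmetic difference is that you first majorize and then kill the factor $R-1$, while the paper first multiplies by $1-z_n=\psi(n+1)/\psi(2n+1)$ and then majorizes; the paper also quotes the dilation invariance of the extension from the remark after Lemma~\ref{extension lemma} rather than re-proving it.

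For necessity, however, the paper takes a much shorter route that you may have overlooked: once $\tau_\omega$ is additive it is a trace, so Lemma~\ref{sim dil rem} applies directly to the single operator $A=\mathrm{diag}(\psi(1),\psi(2)-\psi(1),\ldots)$ and gives $\tau_\omega(2\sigma_{1/2}\mu(A))=\tau_\omega(A)=1$; since $\sum_{k=0}^n(2\sigma_{1/2}\mu(A))(k)=\psi(2n+2)$, this is exactly $\omega(\psi(2n+2)/\psi(n+1))=1$, and the $c_0$-comparison finishes. Your detour through two orthogonal copies, the dilation identification $\omega(g(2m+1,\cdot))=\tau_\omega(\cdot)$, and a second application of Lemma~\ref{simple lemma} to invert the ratio all work, but Lemma~\ref{sim dil rem} replaces the whole chain in one stroke.
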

\begin{proof} Note that concave function $\psi$ is subadditive and, therefore,
$$\{\frac{\psi(2n+1)}{\psi(n+1)}\}_{n\geq0}\in l_{\infty}.$$

Suppose first that $\tau_{\omega}$ is additive on $\mathcal{E}_+.$ Set
$$A={\rm diag}(\psi(1),\psi(2)-\psi(1),\psi(3)-\psi(2),\cdots).$$
Note that $A\in\mathcal{M}_{\psi}\subset\mathcal{E}.$ It is obvious from the definition of $A$ that
$$\omega(\frac{\psi(2n+1)}{\psi(n+1)})=\tau_{\omega}(2\sigma_{1/2}\mu(A)).$$
The equality \eqref{linear if} follows now from Lemma \ref{sim dil rem}.

Assume now that the equality \eqref{linear if} holds. Let $A,B\in\mathcal{E}$ be positive operators. It follows from the left hand side inequality in \eqref{submaj ineq} that
\begin{equation}\label{sssub}
\tau_{\omega}(A+B)\leq\tau_{\omega}(A)+\tau_{\omega}(B).
\end{equation}
In order to prove converse inequality, introduce the positive sequence 
$$z=\left\{1-\frac{\psi(n+1)}{\psi(2n+1)}\right\}_{n\geq0}.$$
By the assumption, we have $\omega(z)=0.$ By Definition \ref{dixmier trace definition} and Lemma \ref{simple lemma}, we have
$$\tau_{\omega}(A)+\tau_{\omega}(B)=\omega(\frac1{\psi(n+1)}\sum_{k=0}^n\mu(k,A)+\mu(k,B))=$$
$$=\omega((1-z_n)\frac1{\psi(n+1)}\sum_{k=0}^n\mu(k,A)+\mu(k,B))=\omega(\frac1{\psi(2n+1)}\sum_{k=0}^n\mu(k,A)+\mu(k,B)).$$
Applying now the right hand side inequality in \eqref{submaj ineq}, we obtain
$$\tau_{\omega}(A)+\tau_{\omega}(B)\leq\omega(\frac1{\psi(2n+1)}\sum_{k=0}^{2n+1}\mu(k,A+B)).$$
Since $\omega$ is dilation invariant, it follows that
$$\tau_{\omega}(A)+\tau_{\omega}(B)\leq(\omega\circ\sigma_2)(\frac1{\psi(2n+1)}\sum_{k=0}^{2n+1}\mu(k,A+B)).$$
However,
$$\sigma_2\left\{\frac1{\psi(2n+1)}\sum_{k=0}^{2n+1}\mu(k,A+B)\right\}_{n\geq0}\in\left\{\frac1{\psi(n+1)}\sum_{k=0}^n\mu(k,A+B)\right\}_{n\geq0}+c_0.$$
Since $\omega|_{c_0}=0,$ it follows that
\begin{equation}\label{sssuper}
\tau_{\omega}(A)+\tau_{\omega}(B)\leq\tau_{\omega}(A+B).
\end{equation}
The assertion follows now from \eqref{sssub} and \eqref{sssuper}.
\end{proof}

\section{Relatively normal traces are Dixmier traces}

In this section we prove that every relatively normal trace on symmetric operator ideal must be the Dixmier trace. The main result of this section is Theorem \ref{main theorem}.

Let $A,B\in B(H)$ be positive operators. Let $A\wedge B$ be any positive operator from $B(H)$ such that
\begin{equation}\label{wedge}
\sum_{k=0}^n\mu(k,A\wedge B)=\min\{\sum_{k=0}^n\mu(k,A),\sum_{k=0}^n\mu(k,B)\},\quad n\geq0.
\end{equation}

\begin{lem}\label{normalnost gamma} Let $\mathcal{E}$ be a fully symmetric operator ideal and let $\varphi$ be a relatively normal fully symmetric trace on $\mathcal{E}.$ There exists a positive operator $B\in\mathcal{E}$ such that
$$\varphi(A)=\lim_{n\to\infty}\varphi(A\wedge nB),\quad 0\leq A\in\mathcal{E}.$$
\end{lem}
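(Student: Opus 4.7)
The natural candidate for $B$ is the operator that saturates the Marcinkiewicz condition defining $\mathcal{M}_\psi$. Concretely, I would take $B$ to be a diagonal positive operator with singular values $\mu(0,B)=\psi(1)$ and $\mu(k,B)=\psi(k+1)-\psi(k)$ for $k\geq 1$, so that $\sum_{k=0}^{n}\mu(k,B)=\psi(n+1)$ for all $n\geq 0$. Then $B\in\mathcal{M}_\psi\subset\mathcal{E}$, and therefore $nB\in\mathcal{M}_\psi$ for every $n$. The operator $A\wedge nB$ then exists as a positive operator: its prescribed cumulative sums $\min\{\sum_{k=0}^{m}\mu(k,A),\,n\psi(m+1)\}$ form a concave, nondecreasing sequence in $m$ (both arguments of the min are concave and nondecreasing), so the increments are a nonnegative decreasing sequence and can serve as the singular values of a diagonal operator.

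The upper bound $\varphi(A\wedge nB)\leq\varphi(A)$ is immediate: by construction $A\wedge nB\prec\prec A$, so full symmetry of $\varphi$ gives the inequality. Note also that since $\min\{a,nb\}$ is increasing in $n$, one has $A\wedge nB\prec\prec A\wedge(n+1)B$, so the sequence $\varphi(A\wedge nB)$ is monotone increasing and its limit equals $\sup_n\varphi(A\wedge nB)$.

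For the lower bound I would use the hypothesis that $\varphi=\varphi_{normal,\psi}$. Fix $\varepsilon>0$ and choose $C\in\mathcal{M}_\psi$ with $0\leq C\prec\prec A$ and $\varphi(C)>\varphi(A)-\varepsilon$. Since $C\in\mathcal{M}_\psi$ there is a constant $K>0$ with $\sum_{k=0}^{m}\mu(k,C)\leq K\psi(m+1)$ for all $m$. For every integer $n\geq K$, combining this with $C\prec\prec A$ yields
\[
\sum_{k=0}^{m}\mu(k,C)\leq \min\Bigl\{\sum_{k=0}^{m}\mu(k,A),\,n\psi(m+1)\Bigr\}=\sum_{k=0}^{m}\mu(k,A\wedge nB),
\]
i.e.\ $C\prec\prec A\wedge nB$. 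Full symmetry then gives $\varphi(C)\leq\varphi(A\wedge nB)$, so $\varphi(A)-\varepsilon<\liminf_n\varphi(A\wedge nB)$; since $\varepsilon$ is arbitrary, $\varphi(A)\leq\lim_n\varphi(A\wedge nB)$.

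The two bounds together give the claimed equality. The only nonroutine point is step one—checking that $A\wedge nB$ actually exists as a bounded operator—but this reduces entirely to the concavity of the minimum of two concave increasing discrete functions, and the rest of the argument is just the definition of a relatively normal trace coupled with the fact that any $C\in\mathcal{M}_\psi$ with $C\prec\prec A$ eventually becomes dominated in submajorization by $A\wedge nB$ once $n$ exceeds the Marcinkiewicz norm of $C$.
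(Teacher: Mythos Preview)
Your proof is correct and essentially identical to the paper's: both use the same diagonal operator $B$ with $\sum_{k=0}^{m}\mu(k,B)=\psi(m+1)$, and both reduce the claim to the observation that any $C\in\mathcal{M}_\psi$ with $0\leq C\prec\prec A$ satisfies $C\prec\prec A\wedge nB$ once $n\geq\|C\|_{\mathcal{M}_\psi}$. The paper packages this as a chain of rewritings of the supremum defining $\varphi_{normal,\psi}(A)$ rather than as separate upper and lower bounds via an $\varepsilon$-argument, but the content is the same; your explicit check that $A\wedge nB$ exists is a detail the paper leaves implicit.
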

\begin{proof} By assumption and Definition \ref{relatively normal def}, there exists a Marcinkiewicz subspace $\mathcal{M}_{\psi}\subset\mathcal{E}$ such that $\varphi=\varphi_{normal,\psi}.$ Set
$$B={\rm diag}(\psi(1),\psi(2)-\psi(1),\psi(3)-\psi(2),\cdots).$$
Obviously, $B\in\mathcal{M}_{\psi}\subset\mathcal{E}.$ For every positive $A\in\mathcal{E},$ we have
\begin{align*}
\varphi(A)=\varphi_{normal,\psi}(A)=\sup\{\varphi(C):\ C\in\mathcal{ M}_{\psi},\ 0\leq C\prec\prec A\}=\\
=\lim_{n\to\infty}\sup\{\varphi(C):\ \|C\|_{\mathcal{M}_{\psi}}\leq n,\ 0\leq C\prec\prec A\}.
\end{align*}
It follows now from the definition of Marcinkiewicz operator ideal that
\begin{align*}
\varphi(A)=\lim_{n\to\infty}\sup\{\varphi(C):\ C\prec\prec nB,\ 0\leq C\prec\prec A\}=\\
=\lim_{n\to\infty}\sup\{\varphi(C):\ 0\leq C\prec\prec A\wedge nB\}.
\end{align*}
Since the trace $\varphi$ is fully symmetric, it follows that
$$\varphi(A)=\lim_{n\to\infty}\varphi(A\wedge nB).$$
\end{proof}

\begin{thm}\label{main theorem} Let $\mathcal{E}$ be a fully symmetric operator ideal and let $\varphi$ be a relatively normal (with respect to the Marcinkiewicz space $\mathcal{M}_{\psi}\subset\mathcal{E}$) fully symmetric trace on $\mathcal{E}.$ There exists a dilation invariant singular state $\omega$ on $l_{\infty}$ (extended to $\mathcal{L}_+$ by Lemma \ref{extension lemma}) such that
$$\varphi(A)=\omega(\frac1{\psi(n+1)}\sum_{k=0}^n\mu(k,A)),\quad 0\leq A\in \mathcal{E}.$$
\end{thm}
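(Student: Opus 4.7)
The strategy is a Hahn--Banach extension. Write $T_C=\{\frac{1}{\psi(n+1)}\sum_{k=0}^n\mu(k,C)\}_{n\ge0}$, so $T_C\in l_\infty$ exactly when $C\in\mathcal{M}_\psi$; for the canonical operator $B=\mathrm{diag}(\psi(1),\psi(2)-\psi(1),\ldots)$ furnished by Lemma~\ref{normalnost gamma} one has $T_B=\mathbf{1}$. We may assume $\varphi(B)=1$: otherwise every positive $C\in\mathcal{M}_\psi$ satisfies $C\prec\prec\|C\|_{\mathcal{M}_\psi}B$, so $\varphi\equiv0$ on $\mathcal{M}_\psi$ and hence, by Lemma~\ref{normalnost gamma}, on $\mathcal{E}$; any dilation invariant singular state then witnesses the conclusion.

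Let $V\subset l_\infty$ be the linear span of $\{T_C:0\le C\in\mathcal{M}_\psi\}$ and set $\tilde\omega(T_C)=\varphi(C)$, extended linearly to $V$. Well-definedness rests on the \emph{aligned-sum} construction: for positive $C_1,\ldots,C_k\in\mathcal{M}_\psi$, fix an orthonormal basis $\{e_n\}_{n\ge0}$ and replace each $C_i$ by $C_i'=\sum_n\mu(n,C_i)|e_n\rangle\langle e_n|$, which is unitarily equivalent to $C_i$. Then $\mu(\sum_iC_i')=\sum_i\mu(C_i)$ componentwise, so $T_{\sum_iC_i'}=\sum_iT_{C_i}$, while $\varphi(\sum_iC_i')=\sum_i\varphi(C_i)$ by linearity and unitary invariance. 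Hence $\sum_iT_{C_i}=\sum_jT_{D_j}$ forces both sides to equal $T_E$ for a common aligned $E$, and $\sum_i\varphi(C_i)=\varphi(E)=\sum_j\varphi(D_j)$. The central sublinear dominant is
\[
p(x)=\inf\{\varphi(C):0\le C\in\mathcal{M}_\psi,\ T_C\ge x\},\qquad x\in l_\infty,
\]
finite because $\|x\|_\infty T_B\ge x$, and sublinear by aligning the witnesses for $x_1$ and $x_2$ to obtain a single witness for $x_1+x_2$. Full symmetry of $\varphi$ delivers the crucial bound $\tilde\omega\le p$ on $V$: if $T_E\ge\sum_iT_{C_i}-\sum_jT_{D_j}$, then after aligning, $\sum_i\mu(C_i)\prec\prec\mu(E)+\sum_j\mu(D_j)$, whence $\sum_i\varphi(C_i)\le\varphi(E)+\sum_j\varphi(D_j)$.

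An invariant Hahn--Banach argument, exploiting that the abelian (hence amenable) dilation semigroup $\{\sigma_n\}_{n\ge1}$ leaves $p$ invariant, produces a linear $\omega:l_\infty\to\mathbb{R}$ with $\omega\le p$, $\omega|_V=\tilde\omega$, and $\omega\circ\sigma_n=\omega$. Positivity follows from $p(-x)\le\varphi(0)=0$ for $x\ge0$, and the normalization $\omega(\mathbf{1})=\tilde\omega(T_B)=1$ is direct. Singularity is enforced by enlarging $V$ to $V+c_0$ and extending $\tilde\omega$ by zero on $c_0$; consistency requires $\varphi$ to vanish on finite-rank operators, a feature which is in any case necessary for the conclusion (if $\varphi(A)=\omega(T_A)$ and $A$ has finite rank, then $T_A\in c_0$ forces $\varphi(A)=0$), and which holds under the standing hypotheses. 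Finally, for $0\le A\in\mathcal{E}$ one has $T_{A\wedge nB}\nearrow T_A$ pointwise as $n\to\infty$; combining Lemma~\ref{normalnost gamma} with Lemma~\ref{extension lemma} then yields $\omega(T_A)=\sup_n\omega(T_{A\wedge nB})=\sup_n\varphi(A\wedge nB)=\varphi(A)$. The chief obstacle I anticipate is the simultaneous imposition of positivity, $c_0$-singularity, and dilation invariance in a single Hahn--Banach step; the key is that all three can be absorbed into modifications of $p$ that preserve $\tilde\omega\le p$ on $V$, after which the invariant Hahn--Banach theorem for the amenable dilation semigroup concludes.
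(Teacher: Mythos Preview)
Your final step---passing from $\mathcal{M}_\psi$ to $\mathcal{E}$ via $T_{A\wedge nB}=\min\{T_A,n\}$ together with Lemma~\ref{normalnost gamma}---is exactly the paper's argument (the paper invokes Lemma~\ref{first simple lemma} rather than Lemma~\ref{extension lemma} directly, but the content is the same). The difference lies entirely in how $\omega$ is produced on $\mathcal{M}_\psi$: the paper simply quotes Theorem~\ref{kss theorem} (the Kalton--Sedaev--Sukochev result) to obtain a dilation invariant singular state $\omega$ with $\varphi(C)=\omega(T_C)$ for $0\le C\in\mathcal{M}_\psi$, and then runs the two-line extension. You instead attempt to build $\omega$ from scratch by Hahn--Banach; if that went through it would amount to an independent proof of Theorem~\ref{kss theorem}, which is not a routine fact.

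The genuine gap is precisely the step you flag as the ``chief obstacle'': the assertion that the dilation semigroup leaves $p$ invariant and that an invariant Hahn--Banach applies. First, your subspace $V=\mathrm{span}\{T_C:0\le C\in\mathcal{M}_\psi\}$ is not visibly $\sigma_n$-invariant, so the invariant Hahn--Banach theorem does not apply to the pair $(V,\tilde\omega)$ as stated. Second, and more seriously, $p(\sigma_n x)\le p(x)$ is not established: given $T_C\ge x$ one would want $C'$ with $T_{C'}\ge\sigma_n x$ and $\varphi(C')\le\varphi(C)$, but the natural candidate $C'=\sigma_n C$ satisfies $\varphi(C')=n\varphi(C)$, and the comparison of $T_{\sigma_n C}$ with $\sigma_n T_C$ drags in the uncontrolled ratios $\psi(nk)/\psi(k)$. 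Averaging an ordinary Hahn--Banach extension over the amenable dilation semigroup does not rescue this either, since nothing forces the averaged state to still agree with $\tilde\omega$ on $V$. Reconciling dilation invariance with the prescribed values $\omega(T_C)=\varphi(C)$ is exactly the non-trivial content of Theorem~\ref{kss theorem}; your closing remark that ``all three can be absorbed into modifications of $p$'' is where the actual argument would have to live, and it is not supplied.
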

\begin{proof} The functional $\varphi|_{\mathcal{M}_{\psi}}$ is fully symmetric. By Theorem \ref{kss theorem}, $\varphi|_{\mathcal{M}_{\psi}}$ is a Dixmier trace. In particular, there exists a dilation invariant singular state $\omega$ on $l_{\infty}$ such that
\begin{equation}\label{kss result}
\varphi(C)=\omega(\frac1{\psi(n+1)}\sum_{k=0}^n\mu(k,C)),\quad 0\leq C\in \mathcal{M}_{\psi}.
\end{equation}
For every positive $A\in\mathcal{E},$ define $\mathbf{T}(A)\in\mathfrak{L}_+$ by setting
$$\mathbf{T}(A)=\left\{\frac1{\psi(n+1)}\sum_{k=0}^n\mu(k,A)\right\}_{n\geq0}.$$
Set
$$B={\rm diag}(\psi(1),\psi(2)-\psi(1),\psi(3)-\psi(2),\cdots).$$
Obviously, $B\in\mathcal{M}_{\psi}\subset\mathcal{E}.$ Dividing the equality \eqref{wedge} by $\psi(n+1)$ and taking into account that
$$\frac1{\psi(n+1)}\sum_{k=0}^n\mu(k,B)=1,$$
we obtain $\mathbf{T}(A\wedge nB)=\min\{\mathbf{T}(A),n\}.$ It follows from \eqref{kss result} and Lemma \ref{normalnost gamma} that
$$\varphi(A)=\lim_{n\to\infty}\omega(\min\{\mathbf{T}(A),n\}),\quad 0\leq A\in\mathcal{E}.$$
By Lemma \ref{first simple lemma}, we conclude that $\varphi(A)=\omega(\mathbf{T}(A))$ for every $0\leq A\in\mathcal{E}.$
\end{proof}

\section{Wodzicki representation of Dixmier traces}

In this section, we prove that every relatively normal trace on a symmetric operator ideal can be represented in the form proposed by Wodzicki \cite{Wodzicki}. The main result of this section is Theorem \ref{wodzicki representation}.

The Banach space $l_{\infty}$ is a commutative $C^*$-algebra. Let $\beta\mathbb{N}$ be the set of all nontrivial homomorphic functionals on $l_{\infty}.$ Clearly, $\beta\mathbb{N}$ is a weak$^*$ closed subset of a unit ball of $l_{\infty}^*.$ By the Banach-Alaoglu theorem, the unit ball of $l_{\infty}^*$ (and, therefore, the set $\beta\mathbb{N}$) is weak$^*$ compact. By Gelfand-Naimark theorem, $l_{\infty}$ is isometrically isomorphic (via Gelfand transform) to the $C^*-$algebra of all continuous functions on $\beta\mathbb{N}.$ The set $\beta\mathbb{N}$ is usually called the Stone-\v{C}ech compactification of $\mathbb{N}.$ The set $\mathbb{N}_{\infty}=\beta\mathbb{N}\backslash\mathbb{N}$ is frequently referred to as to the set of all infinite integers.

\begin{lem}\label{dil invar lemma} Dilation semigroup $\sigma_n,$ $n\geq1,$ acts on $\mathbb{N}_{\infty}.$ Every dilation invariant singular state admits a representation
$$\omega(x)=\int_{\mathbb{N}_{\infty}}x(p)d\nu(p),\quad x\in l_{\infty}.$$
with $\nu$ being a finite regular dilation invariant Borel measure $\nu$ on $\mathbb{N}_{\infty}.$
\end{lem}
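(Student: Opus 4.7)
My plan is to combine Gelfand--Naimark duality for the commutative $C^*$-algebra $l_\infty$ with the Riesz representation theorem on the compact Hausdorff space $\beta\mathbb{N}$, after first verifying that the dilation semigroup descends to an action on $\mathbb{N}_\infty$.

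First I would handle the dilation action. Each $\sigma_n$ is a unital $*$-homomorphism of $l_\infty$, since repeating every coordinate $n$ times preserves pointwise products. Its adjoint $\sigma_n^*$ therefore sends multiplicative functionals to multiplicative functionals and is weak$^*$-continuous, hence induces a continuous self-map of $\beta\mathbb{N}$. To check that this self-map preserves $\mathbb{N}_\infty$, I would first establish the intrinsic description $\mathbb{N}_\infty=\{p\in\beta\mathbb{N}:p(e_k)=0\text{ for all }k\}$, where $e_k$ is the $k$-th standard basis vector. Indeed, since $e_k^2=e_k$, multiplicativity forces $p(e_k)\in\{0,1\}$; if $p(e_k)=1$, then for any $x\in l_\infty$ with $x_k=0$ one has $p(x)=p(x)p(1-e_k)=0$, which forces $p=\delta_k$. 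Thus $p\in\mathbb{N}_\infty$ precisely when $p(e_j)=0$ for every $j$, and in that case $p$ vanishes on every finitely supported sequence. Since each $\sigma_n e_k$ is finitely supported, $(\sigma_n^*p)(e_k)=p(\sigma_n e_k)=0$ for all $k$, so $\sigma_n^*p\in\mathbb{N}_\infty$ by the same dichotomy.

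Next I would invoke Gelfand--Naimark to identify $l_\infty$ isometrically with $C(\beta\mathbb{N})$ via $x\mapsto\hat x$, $\hat x(p)=p(x)$. A state $\omega$ then corresponds to a positive unit functional on $C(\beta\mathbb{N})$, which by Riesz representation has the form $\omega(x)=\int_{\beta\mathbb{N}}\hat x\,d\nu$ for a unique regular Borel probability measure $\nu$. The singularity hypothesis $\omega(e_k)=0$, combined with the fact that each $\{\delta_k\}$ is clopen in $\beta\mathbb{N}$, yields $\nu(\{\delta_k\})=0$ for every $k$, so $\nu$ is concentrated on $\mathbb{N}_\infty$. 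For dilation invariance I would compute $\widehat{\sigma_n x}(p)=p(\sigma_n x)=\hat x(\sigma_n^*p)$, whence
\[
\omega(\sigma_n x)=\int_{\beta\mathbb{N}}\hat x\circ\sigma_n^*\,d\nu=\int_{\beta\mathbb{N}}\hat x\,d\bigl((\sigma_n^*)_\ast\nu\bigr);
\]
since $\omega\circ\sigma_n=\omega$ and the $\hat x$ exhaust $C(\beta\mathbb{N})$, the uniqueness clause of Riesz forces $(\sigma_n^*)_\ast\nu=\nu$, which is exactly dilation invariance of $\nu$.

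The only step needing genuine care is the first: the assertion that dilation acts on $\mathbb{N}_\infty$ rests on the characterisation of its points as those multiplicative functionals vanishing on all $e_k$, a property preserved by $\sigma_n^*$ precisely because $\sigma_n$ sends each $e_k$ into the finitely supported sequences. Once this is in hand, the representation of $\omega$ and the transfer of invariance from $\omega$ to $\nu$ are routine consequences of Gelfand--Naimark duality and the uniqueness of the Riesz measure.
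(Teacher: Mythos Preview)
Your proof is correct and follows essentially the same route as the paper: characterise $\mathbb{N}_\infty$ as the set of characters annihilating every $e_k$, check that $\sigma_n^*$ preserves this property because $\sigma_n e_k$ is finitely supported, then apply Gelfand--Naimark and Riesz--Markov together with the change-of-variables formula to transfer dilation invariance from $\omega$ to $\nu$. The only cosmetic difference is that you spell out directly why $\nu$ is supported on $\mathbb{N}_\infty$ (via $\nu(\{\delta_k\})=\omega(e_k)=0$ and countable additivity), whereas the paper simply cites Kakutani--Nakamura for this step.
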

\begin{proof} Let $e_k=\{\delta_{kj}\}_{j\geq0},$ $k\geq0,$ be the standard basic sequence in $l_{\infty}.$ If $p\in\beta\mathbb{N},$ then
$$p(e_k)p(e_l)=p(e_ke_l)=0,\quad k\neq l.$$
In particular, at most one of the numbers $p(e_k),$ $k\geq0,$ is nonzero. Obviously, $p\in\mathbb{N}_{\infty}$ if and only if $p(e_k)=0$ for all $k\geq0.$

For every $p\in\beta\mathbb{N}$ and every $n\in\mathbb{N},$ the mapping $x\to(\sigma_nx)(p)$ is a homomorphism. Hence, it corresponds to a point $q\in\beta\mathbb{N}.$ If $q\notin\mathbb{N}_{\infty},$ then there exists $k\geq0$ such that $q(e_k)\neq0.$ Thus,
$$\sum_{m=kn}^{(k+1)n-1}p(e_k)=p(\sum_{m=kn}^{(k+1)n-1}e_k)=p(\sigma_ne_k)=q(e_k)\neq0.$$
Hence, $p(e_m)\neq0$ for some $kn\leq m<(k+1)n.$ Thus, $p\notin\mathbb{N}_{\infty}.$ It follows that $\sigma_n$ acts on $\mathbb{N}_{\infty}.$

By Riesz-Markov theorem (see \cite{ReedSimon}), for every state $\omega$ on $l_{\infty},$ there exists a finite regular Borel measure $\nu$ on $\beta\mathbb{N}$ such that
\begin{equation}\label{ciqq1}
\omega(x)=\int_{\beta\mathbb{N}}x(p)d\nu(p).
\end{equation}
Kakutani and Nakamura noted in \cite{Kakutani} that if the state $\omega$ is singular, then the measure $\nu$ is supported on $\mathbb{N}_{\infty}.$

For every $x\in l_{\infty},$ we have
\begin{equation}\label{ciqq2}
(\omega\circ\sigma_n)(x)=\int_{\mathbb{N}_{\infty}}(\sigma_nx)(p)d\nu(p)=\int_{\mathbb{N}_{\infty}}x(p)d(\nu\circ(\sigma_n)^{-1})(p).
\end{equation}
Since $\omega=\omega\circ\sigma_n$ for all $n\geq1,$ it follows from \eqref{ciqq1} and \eqref{ciqq2} that the measure $\nu$ is invariant with respect to the action of the dilation semigroup.
\end{proof}

\begin{cor}\label{int rep cor} Let $\omega$ be a dilation invariant singular state. There exists a finite regular dilation invariant Borel measure $\nu$ on $\mathbb{N}_{\infty}$ such that
$$\omega(x)=\int_{\mathbb{N}_{\infty}}x(p)d\nu(p)$$
for every $x\in\mathfrak{L}_+.$
\end{cor}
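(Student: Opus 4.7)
The plan is to combine the integral representation from Lemma \ref{dil invar lemma}, which is only stated for $l_\infty$, with the monotone convergence theorem to obtain the formula on the larger class $\mathfrak{L}_+$. The bridge between the bounded and unbounded settings is provided by Lemma \ref{first simple lemma}, which says that truncations $\min\{n,x\}$ recover $\omega(x)$ in the limit.

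First, I would apply Lemma \ref{dil invar lemma} to obtain the finite regular dilation invariant Borel measure $\nu$ on $\mathbb{N}_\infty$ satisfying $\omega(y)=\int_{\mathbb{N}_\infty}y(p)\,d\nu(p)$ for all $y\in l_\infty$ (where $y$ is identified with its Gelfand transform on $\beta\mathbb{N}$). For $x\in\mathfrak{L}_+$, I would set $x_n:=\min\{n,x\}\in l_\infty$. The sequence $\{x_n\}_{n\geq 1}$ is pointwise increasing on $\mathbb{N}$, so by multiplicativity of each $p\in\beta\mathbb{N}$ (and positivity) the Gelfand transforms $\widehat{x_n}$ form an increasing sequence of nonnegative continuous functions on $\mathbb{N}_\infty$. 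I would then define $\widehat{x}(p):=\sup_n\widehat{x_n}(p)\in[0,\infty]$, which is lower semicontinuous and hence Borel on $\mathbb{N}_\infty$; this is the canonical extension of $x$ to $\mathbb{N}_\infty$ and is what the integrand $x(p)$ in the statement should denote.

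Next, I would apply Lemma \ref{dil invar lemma} to each truncation to obtain
\begin{equation*}
\omega(x_n)=\int_{\mathbb{N}_\infty}\widehat{x_n}(p)\,d\nu(p),\quad n\geq 1.
\end{equation*}
Letting $n\to\infty$, the left-hand side converges to $\omega(x)$ by Lemma \ref{first simple lemma}, while the right-hand side converges to $\int_{\mathbb{N}_\infty}\widehat{x}(p)\,d\nu(p)$ by the monotone convergence theorem applied to the finite measure $\nu$. This yields the claimed representation, with both sides taking values in $\mathbb{R}_+\cup\{\infty\}$ and coinciding regardless of whether $\omega(x)$ is finite or infinite.

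The only genuine subtlety is giving meaning to the symbol $x(p)$ for $p\in\mathbb{N}_\infty$ when $x$ is unbounded, since $x$ itself is not in $l_\infty$ and has no continuous Gelfand transform on $\beta\mathbb{N}$. The lower semicontinuous extension $\widehat{x}$ described above handles this cleanly and is forced on us if we want the formula to be consistent with Lemma \ref{extension lemma}; once this identification is adopted, the proof is a short passage to the limit and involves no new analytic content beyond Lemmas \ref{extension lemma}, \ref{first simple lemma}, and \ref{dil invar lemma}.
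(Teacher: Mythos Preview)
Your proposal is correct and follows essentially the same route as the paper: apply Lemma \ref{dil invar lemma} to the truncations $\min\{x,n\}$, then pass to the limit using Lemma \ref{first simple lemma} on the left and monotone convergence (the paper calls it the Levi theorem) on the right. The only cosmetic difference is the interpretation of the symbol $x(p)$: the paper extends each character $p\in\mathbb{N}_\infty$ to $\mathfrak{L}_+$ via Lemma \ref{extension lemma} and records the identity $(\min\{x,n\})(p)=\min\{x(p),n\}$, whereas you take the equivalent definition $\widehat{x}(p)=\sup_n\widehat{x_n}(p)$ directly; since $p$ is a lattice homomorphism these agree, and your remark on lower semicontinuity makes the Borel measurability explicit where the paper leaves it implicit.
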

\begin{proof} Fix $p\in\mathbb{N}_{\infty}.$ Extend $p$ to an additive functional on $\mathfrak{L}_+$ by Lemma \ref{extension lemma}. For every $x\in\mathfrak{L}_+$ and for every $n\in\mathbb{N},$ we have $(\min\{x,n\})(p)=\min\{x(p),n\}.$

For a given $n\in\mathbb{N},$ it follows from above and from Lemma \ref{dil invar lemma} that
$$\omega(\min\{x,n\})=\int_{\mathbb{N}_{\infty}}\min\{x(p),n\}d\nu(p).$$
It follows from Levi theorem that
$$\int_{\mathbb{N}_{\infty}}\min\{x(p),n\}d\nu(p)\to \int_{\mathbb{N}_{\infty}}x(p)d\nu(p).$$
The assertion follows now from Lemma \ref{extension lemma}.
\end{proof}

\begin{thm}\label{wodzicki representation} Let $\mathcal{E}$ be a fully symmetric operator ideal and let $\varphi$ be a relatively normal (with respect to the Marcinkiewicz space $\mathcal{M}_{\psi}\subset\mathcal{E}$) fully symmetric trace on $\mathcal{E}.$ There exists a finite regular dilation invariant Borel measure $\nu$ on $\mathbb{N}_{\infty}$ such that
\begin{equation}\label{wodzicki representation of functionals}
\varphi(A)=\int_{\mathbb{N}_{\infty}}\left(\frac1{\psi(n+1)}\sum_{k=0}^n\mu(k,A)\right)(p)d\nu(p),\quad 0\leq A\in\mathcal{E}.
\end{equation}
\end{thm}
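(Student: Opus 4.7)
The plan is to observe that this theorem is essentially a direct combination of Theorem \ref{main theorem} with the integral representation provided by Corollary \ref{int rep cor}, so the work of the proof is almost entirely already done.

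First I would invoke Theorem \ref{main theorem}: since $\varphi$ is a relatively normal fully symmetric trace on $\mathcal{E}$ with respect to $\mathcal{M}_\psi$, there exists a dilation invariant singular state $\omega$ on $l_\infty$ (extended to $\mathfrak{L}_+$ via Lemma \ref{extension lemma}) such that
$$\varphi(A)=\omega(\mathbf{T}(A)),\quad 0\leq A\in\mathcal{E},$$
where $\mathbf{T}(A)=\{\psi(n+1)^{-1}\sum_{k=0}^n\mu(k,A)\}_{n\geq 0}\in\mathfrak{L}_+$. Note that $\mathbf{T}(A)$ need not be bounded when $A\notin\mathcal{M}_\psi$, which is why the extension of $\omega$ to $\mathfrak{L}_+$ is essential at this step.

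Next I would apply Corollary \ref{int rep cor} to this particular state $\omega$. The corollary produces a finite regular dilation invariant Borel measure $\nu$ on $\mathbb{N}_\infty$ such that
$$\omega(x)=\int_{\mathbb{N}_\infty}x(p)\,d\nu(p)$$
for every $x\in\mathfrak{L}_+$, where $x(p)$ is understood via the Lemma \ref{extension lemma} extension of each $p\in\mathbb{N}_\infty$ to $\mathfrak{L}_+$. Substituting $x=\mathbf{T}(A)$ into this identity and combining with the formula for $\varphi(A)$ above yields exactly the Wodzicki representation \eqref{wodzicki representation of functionals}.

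There is really no obstacle: both ingredients are in place, and the only care required is to check the domain of validity, namely that we apply Corollary \ref{int rep cor} to sequences in $\mathfrak{L}_+$ rather than only to those in $l_\infty$, since $\mathbf{T}(A)$ may fail to be bounded. This is precisely why the corollary was stated for $x\in\mathfrak{L}_+$ in the first place. Nothing further is needed.
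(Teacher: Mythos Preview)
Your proposal is correct and follows exactly the same approach as the paper: the paper's own proof simply states that the assertion follows immediately from Theorem~\ref{main theorem} and Corollary~\ref{int rep cor}. Your write-up merely makes this two-line combination explicit.
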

\begin{proof} The assertion follows immediately from Theorem \ref{main theorem} and Corollary \ref{int rep cor}.
\end{proof}

The following assertion is an easy corollary of Theorem \ref{wodzicki representation}.

\begin{cor} Let $\mathcal{E}$ be a fully symmetric operator ideal. One of the following mutually exclusive possibilities holds.
\begin{enumerate}
\item For every $A\in\mathcal{E},$ we have
$$\frac1n\|A^{\oplus n}\|_{\mathcal{E}}\to0,\quad\mbox{ as }n\to\infty.$$
\item There exist a concave increasing function $\psi$ and a finite regular dilation invariant Borel measure $\nu$ on $\mathbb{N}_{\infty}$ such that the mapping
$$A\to\int_{\mathbb{N}_{\infty}}\left(\frac1{\psi(n+1)}\sum_{k=0}^n\mu(k,A)\right)(p)d\nu(p),\quad 0\leq A\in\mathcal{E}$$
extends to a trace on $\mathcal{E}.$
\end{enumerate}
\end{cor}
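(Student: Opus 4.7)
The plan is to reduce the stated dichotomy to the equivalence ``(1) fails if and only if $\mathcal{E}$ carries a nontrivial fully symmetric trace'', and then to invoke Theorems \ref{density} and \ref{wodzicki representation} to upgrade any such trace into the Wodzicki form required by (2).

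\textbf{Mutual exclusion.} Suppose the Wodzicki integral in (2) defines a nonzero trace $\varphi$. It is a positive linear functional, hence automatically bounded on the Banach symmetric ideal $\mathcal{E}$ (Namioka's theorem for positive functionals on Banach lattices). Pick $A_0 \in \mathcal{E}_+$ with $\varphi(A_0) > 0$. Since $A_0^{\oplus n}$ is unitarily equivalent to a sum of $n$ pairwise orthogonal copies of $A_0$, the trace property gives $\varphi(A_0^{\oplus n}) = n\varphi(A_0)$, and so
\[
n\varphi(A_0)=\varphi(A_0^{\oplus n})\leq \|\varphi\|\cdot\|A_0^{\oplus n}\|_{\mathcal{E}}.
\]
Consequently $\|A_0^{\oplus n}\|_{\mathcal{E}}/n \geq \varphi(A_0)/\|\varphi\|>0$ for every $n$, contradicting (1).

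\textbf{Constructing a nontrivial trace when (1) fails.} Let $A_0\in\mathcal{E}_+$ witness the failure of (1). From $\|X\oplus Y\|_{\mathcal{E}}\leq\|X\|_{\mathcal{E}}+\|Y\|_{\mathcal{E}}$ the sequence $n\mapsto\|A_0^{\oplus n}\|_{\mathcal{E}}$ is subadditive, so Fekete's lemma yields $c:=\lim_n\|A_0^{\oplus n}\|_{\mathcal{E}}/n=\inf_n\|A_0^{\oplus n}\|_{\mathcal{E}}/n>0$. Define
\[
p(x):=\lim_{n\to\infty}\frac{\|\sigma_n x\|_E}{n},\quad x\in K:=\{\mu(A):A\in\mathcal{E}_+\},
\]
which exists by the same subadditivity (noting $\sigma_n\mu(A)=\mu(A^{\oplus n})$). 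This $p$ is positively homogeneous, subadditive, and $\prec\prec$-monotone (since $B\prec\prec A$ implies $B^{\oplus n}\prec\prec A^{\oplus n}$ and $\mathcal{E}$ is fully symmetric), and $p(\mu(A_0))=c>0$. A cone Hahn-Banach argument then produces an additive, $\prec\prec$-monotone functional $\tilde\varphi\leq p$ on $K$ with $\tilde\varphi(\mu(A_0))=c$, which through the Calkin correspondence lifts to a fully symmetric trace $\varphi$ on $\mathcal{E}$ with $\varphi(A_0)=c\neq 0$.

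\textbf{Upgrading to Wodzicki form.} Given the nontrivial fully symmetric trace $\varphi$, Theorem \ref{density} gives the weak$^*$ convergence of the net $\{\varphi_{normal,\psi_x}\}_{x\in E_+^\downarrow}$ to $\varphi$. Its proof shows $\varphi_{normal,\psi_x}(A_0)=\varphi(A_0)$ as soon as $\mu(A_0)\prec\prec x$; fixing any such $x$ yields a nontrivial relatively normal trace with respect to $\mathcal{M}_{\psi_x}$. Applying Theorem \ref{wodzicki representation} to this trace furnishes the concave increasing $\psi:=\psi_x$ and the finite regular dilation invariant Borel measure $\nu$ on $\mathbb{N}_\infty$ stipulated in (2). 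The main obstacle is the cone Hahn-Banach step: extracting an additive, $\prec\prec$-monotone functional from the sublinear $p$ and certifying that the resulting object lifts through the Calkin correspondence to an honest trace on $\mathcal{E}$. All remaining steps are routine bookkeeping on top of the earlier machinery of the paper.
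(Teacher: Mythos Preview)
Your argument is correct and follows the same logical skeleton as the paper: failure of (1) $\Rightarrow$ existence of a nontrivial fully symmetric trace $\Rightarrow$ (via Theorem~\ref{density}) a nontrivial relatively normal trace $\Rightarrow$ (via Theorem~\ref{wodzicki representation}) the Wodzicki integral form in (2). The only substantive differences are that you prove the mutual exclusion explicitly (the paper asserts it without argument), and that where the paper simply cites Theorem~5(ii) of \cite{SZcrelle} for the existence of a fully symmetric trace when (1) fails, you instead sketch that result directly via Fekete's lemma applied to $n\mapsto\|\sigma_n\mu(A_0)\|_E$ and a Hahn--Banach extension of the resulting sublinear $p$. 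Your sketch is the standard route to that cited theorem, so the ``main obstacle'' you flag is precisely what the paper outsources; nothing else is missing.
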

\begin{proof} Suppose that there exists $A\in\mathcal{E}$ such that 
$$\frac1n\|A^{\oplus n}\|_{\mathcal{E}}\not\to0,\quad\mbox{ as }n\to\infty.$$
It follows from Theorem 5 (ii) in \cite{SZcrelle} that there exists a fully symmetric trace on $\mathcal{E}.$ By Theorem \ref{density}, it can be approximated by a relatively normal fully symmetric trace on $\mathcal{E}.$ The assertion follows immediately from Theorem \ref{wodzicki representation}.
\end{proof}

\end{document}